\tikzstyle{littledot}=[circle, fill, inner sep=.4pt,minimum size=.4pt]
\tikzstyle{vertex}=[circle, fill, inner sep=1pt]
\newtheorem{thm}{Theorem}[section]
\newtheorem{lem}[thm]{Lemma}
\newtheorem{prop}[thm]{Proposition}
\newtheorem{cor}[thm]{Corollary}
\newtheorem{defn}[thm]{Definition}
\newtheorem{lemdef}[thm]{Definition/Lemma}
\newcommand{\R}{{\mathbb R}}
\newcommand{\Q}{{\mathbb Q}}
\newcommand{\N}{{\mathbb N}}
\newcommand{\Z}{{\mathbb Z}}
\newcommand{\cf}{\operatorname{cf}}
\newcommand{\fin}{\sim_\textrm{F}}
\newcommand{\timesf}{\cdot_\textrm{F}}
\newcommand{\cc}{\boldsymbol{c}_\omega}
\newcommand{\ON}{\mathbf{ON}}
\newcommand{\ot}{\operatorname{o.\!t.}}
\newcommand{\timesc}{\cdot_\omega}
\title{The countable condensation on linear orders}
\author[J. Brown]{Jennifer Brown}
\address{Mathematics Department, Augustana University, Sioux Falls, SD, USA}
\email{jennifer.brown@augie.edu}
\author[R. Su\'arez]{Ricardo Su\'arez}
\address{Natural Science Division, Pepperdine University, Malibu, CA, USA}
\email{josericardo.suarez@pepperdine.edu}
\keywords{linear orders, arithmetic of linear orders, order-preserving maps,  condensations of linear orders}
\subjclass{06A05, 06A11, 03E05}
\begin{document}
\maketitle

\begin{abstract}
The countable condensation on a linear order $L$ is the equivalence relation $\sim_\omega$ defined by declaring $x \sim_\omega y$ when the set of points between $x$ and $y$ is countable. We characterize the linear orders $L$ that condense to $1$ under the countable condensation by constructing a linear order $U$ that is universal for the order types $L$ such that $L/\!\!\sim_\omega\, \cong 1$.
We define a multiplication operation $\cdot_\omega$ on the class of linear orders by setting $M \cdot_\omega L$ to be the order type of $(ML)/\!\!\sim_\omega$ (where $ML$ denotes the lexicographic product), and show that the right identities for $\cdot_\omega$ are exactly the uncountable suborders of $U$. The order types of these uncountable suborders of $U$ form a left regular band under $\cdot_\omega$, and the order types of all suborders of $U$ form a semigroup.
\end{abstract}

\section{Introduction}\label{s: introduction}

A condensation of a linear order $L$ is an equivalence relation on $L$ whose equivalence classes are intervals (that is, convex subsets of $L$). Condensations of various kinds -- in particular, the finite condensation $\fin$, where two elements of a linear order are declared equivalent exactly when there are only finitely many elements lying between them -- have been used extensively as a tool for analyzing the structure of linear orders (see, for example, \cite{DuMi} or \cite{ErGu}). In \cite{BrSu}, we defined a multiplication on the class of linear orders in terms of the finite condensation: for linear orders $L$ and $M$, $L \timesf M := \ot(\faktor{LM}{\fin})$, where $LM$ is the lexicographic product of $L$ and $M$. The set of order types of linear orders that are right identities for the operation $\timesf$ is $\{\omega, \omega^*, \zeta\}$ (where $\omega^*$ denotes the reverse order on the ordinal $\omega$ and $\zeta$ denotes the order type of the integers), and we showed in \cite{BrSu} that this set forms a left-regular band. Moreover, $\zeta$ is a universal order type for linear orders that condense to $1$ modulo the finite condensation: for suppose $\faktor{L}{\fin} \cong 1$. Then $L$ has only one equivalence class modulo the finite condensation; that is, for any $x, y \in L$, there are only finitely many points of $L$ between $x$ and $y$. It follows that $L$ is either a finite linear order or is isomorphic to $\N$, $\N^*$, or $\Z$, all of whose order types embed into $\zeta$.

In the present work, we are again interested in algebraic structures that arise when a multiplication on linear orders is defined modulo a condensation, in this case the \textit{countable condensation}. For any linear order $L$ and $x, y \in L$, say that $x$ is equivalent to $y$ modulo the countable condensation, and write $x \sim_\omega y$, when there are only countably many elements between $x$ and $y$. (We show in Lemma/Definition \ref{countable condensation} that $\sim_\omega$ is, in fact, a condensation.) This condensation is mentioned, though not by name, in Exercise 4.10 in \cite{Ro}, where it is used to show that any uncountable dense subordering $A$ of $\R$ contains an $\aleph_1$-dense set. We define a multiplication $\timesc$ on the class of linear orders by $L \timesc M := \ot(\faktor{LM}{\sim_\omega})$. As in the case of the multiplication modulo the finite condensation, the set of linear orders that act as right identities for multiplication modulo the countable condensation does form a left-regular band, as we show in Theorem \ref{big left-regular band}; but unlike in the case of the finite condensation, this set of right identities for $\timesc$ is  infinite. We characterize the set of right identities for $\timesc$ in Theorem \ref{TFAE for right identity mod cc}. In order to do this, we first describe a universal set $U$ for order types that, when modded-out by the countable condensation, are isomorphic to the trivial linear order $1$ (i.e.\ that have only one equivalence class under $\sim_\omega$). The linear order $U$ is constructed by inserting a copy of the rationals between each pair of consecutive elements in $\omega_1^* + \omega_1$ (where $\omega_1^*$ denotes the ordinal $\omega_1$ under the reverse ordering). We then show that the set of right identities for multiplication modulo the countable condensation consists exactly of the uncountable order types that embed into $U$.

\section{Preliminaries}\label{s: preliminaries}

A standard reference for linear orders is \cite{Ro}, and we mostly follow the notational conventions found there. A notable exception is that we write our multiplication of linear orders \textit{lexicographically}: for linear orders $L$ and $M$, $LM$ is the linear order having the order type of the lexicographic order on $L \times M$. In this we follow the convention in \cite{ErGu}. We work in ZFC. For basic notions of set theory, see \cite{Ku}.  Since we treat ordinals as linear orders, our (lexicographic) notation for products of ordinals is the reverse of the standard one. For example, $2 \omega$ in this paper means the linear order that results from replacing each element of $2$ by a copy of $\omega$, so that $2 \omega \cong \omega + \omega$. $L^*$ denotes the underlying set $L$ with the reverse order. An interval $I$ in a linear order $L$ is a convex subset of $L$: if $a, b \in I$ and $a < x < b$, then $x \in I$. $[\{a, b\}]$ denotes either $\{x \in L: a \leq x \leq b\}$ (if $a \leq b$) or $\{x \in L: b \leq x \leq a\}$ (if $b<a$).

The order type of a linear order $L$ is denoted $\ot(L)$. We use the convention from \cite{Ro} that a representative linear order is chosen from each order type, and that, in the case of well-orders, that representative is an ordinal.  We will occasionally conflate linear orders and order types in situations where it should not cause confusion. To distinguish between, e.g., the order type of the first uncountable ordinal and its cardinality, we write $\omega_1$ for the former and $\aleph_1$ for the latter.

A subset $X$ of a linear order $L$ is said to be \textit{cofinal} if it is unbounded above in $L$; that is, if for every $l \in L$, there is an $x \in X$ such that $x \geq l$. Similarly, $X \subseteq L$ is \textit{cointial} if it is unbounded below in $L$.
The \textit{cofinality} of $L$, denoted $\cf(L)$, is the least length of a strictly increasing, cofinal sequence in $L$. The \textit{coinitiality} of $L$, denoted $\cf^*(L)$, is the least length of a strictly decreasing, coinitial sequence in $L$.

If $(P, \leq)$ is any partial order and $r \in P$, then 
$P \uparrow r:=\{q \in P: q \geq r\}$, and $P \downarrow r := \{q \in P: q \leq r\}.$ $P \uparrow r$ is read ``$P$ above $r$'', and $P \downarrow r$ is read ``$P$ below $r$''. If $L$ is a linear order and if we can write $L=L'+T$ for some $L'$ and $T$, then $T$ is called a \textit{tail}, or final segment, of $L$. Note that in general $L$ might have many tails. Similarly, if we can write $L=H+L'$ for some $L'$ and $H$, then $H$ is called a \textit{head}, or initial segment, of $L$. 

\begin{defn}\label{d: condensation}
A \textbf{condensation} of a linear order $L$ is an equivalence relation $\sim$ on $L$ whose equivalence classes are intervals. The equivalence class of $x \in L$ is denoted $\boldsymbol{c}(x)$. $\faktor{L}{\sim}$ is then a linear order as well, ordered by declaring $\boldsymbol{c}(x) < \boldsymbol{c}(y)$ when $l<m$ for all $l \in \boldsymbol{c}(x)$ and $m \in \boldsymbol{c}(y)$.
\end{defn}

\begin{lemdef}\label{countable condensation}
For any linear order $L$ and elements $x, y \in L$, say that $x \sim_\omega y$ if there are only countably many elements between $x$ and $y$; that is, $|[\{x,y\}]|\leq\aleph_0$.
For any linear order $L$, the relation $\sim_\omega$ is a condensation, which we will refer to as the \textbf{countable condensation}.
\end{lemdef}

\begin{proof}
For reflexivity: Let $x \in L$. Then $|[\{x,x\}]| = |\{x\}|= 1 \leq \aleph_0$, so $x \sim_\omega x$.

Symmetry is immediate since $[\{x,y\}] = [\{y,x\}]$.

For transitivity: suppose $x \sim_\omega y$ and $y \sim_\omega z$. If any two of $x, y,$ and $z$ are equal, then clearly $x \sim_\omega z$, so suppose $x, y$, and $z$ are all distinct. \textit{Case 1:} $x < y < z$. Then $[x,z] = [x,y] \cup [y,z]$, and each of $[x,y]$ and $[y,z]$ is countable by assumption, so $[x,z]$ is countable; so $x \sim_\omega z$. \textit{Case 2:} $x < z < y$. In this case $[x,z]$ is a subset of $[x,y]$, and we have supposed $[x,y]$ is countable, so $[x,z]$ is countable as well. \textit{Case 3:} $y < x < z$. Then $[z,x]$ is a subset of  $[y,x]$, which is countable, so $[z,x]$ is countable. \textit{Case 4:} $y < z < x$. Then $[z,x]$ is a subset of $[y,x]$, which is countable. \textit{Case 5:} $z < x < y$. Then $[z,x]$ is a subset of $[z,y]$, which is countable. \textit{Case 6:} $z < y < x$. Similarly to Case 1, we have that $[z,x]$ is the union of two sets, $[z,y]$ and $[y,x]$, which are by assumption countable. 

Thus $\sim_\omega$ is an equivalence relation. Next, we show that all of its equivalence classes are intervals. For $x \in L$, denote by $\cc(x)$ the equivalence class of $x$ modulo $\sim_\omega$:
\begin{align*}
\cc(x) & := \{y \in L: x \sim_\omega y\} \\
& = \{y \in L: \textrm{ there are only countably many points of $L$ between $x$ and $y$}\}.
\end{align*}
Suppose $y, z \in \cc(x)$ with $y < z$, and suppose $w \in L$ with $y < w < z$. If $w=x$, we are done, so suppose $w \neq x$. If $x<y$, then $[x,w] \subseteq [x,z]$, which is countable. If $y<x<z$, then $[\{w,x\}] \subseteq [y,z] = [y,x] \cup [x,z]$, which is a union of two countable sets. If $x>z$, then $[w,x] \subseteq [y,x]$, which is countable by assumption. In all cases, $w \sim_\omega x$, so that $w \in \cc(x)$. Thus $\cc(x)$ is an interval.
\end{proof}

We conclude this section by showing that the countable condensation of a well-ordered set is well-ordered; in particular, we have the class map $\alpha \mapsto \ot(\faktor{\alpha}{\sim_\omega})$ on the class of ordinals $\ON$.

\begin{lem}\label{well ordered-ness inherited for cc}
If $L$ is well-ordered, then so is $\faktor{L}{\sim_\omega}$.
\end{lem}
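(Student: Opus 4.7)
The plan is to show directly that every nonempty subset of $\faktor{L}{\sim_\omega}$ has a least element, using the well-ordering of $L$ itself.

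First, I would let $S$ be an arbitrary nonempty subset of $\faktor{L}{\sim_\omega}$ and form $T := \bigcup_{C \in S} C \subseteq L$. Since $S$ is nonempty and each equivalence class is nonempty, $T$ is a nonempty subset of the well-ordered set $L$, so it has a least element $x := \min T$. I would then take $C_0 := \cc(x)$, observe that $x \in T$ forces $x$ to lie in some class of $S$, and since $\sim_\omega$-classes partition $L$, this class must be $C_0$ itself; in particular $C_0 \in S$.

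Next I would verify that $C_0$ is the least element of $S$ in the quotient order. Suppose for contradiction that some $C' \in S$ satisfies $C' < C_0$ in $\faktor{L}{\sim_\omega}$. By the definition of the quotient order given in Definition \ref{d: condensation}, this means every $y \in C'$ and every $z \in C_0$ satisfy $y < z$. Picking any $y \in C'$, we get $y < x$; but $y \in C' \subseteq T$, contradicting the minimality of $x$ in $T$. Hence $C_0 \leq C$ for every $C \in S$, so $C_0 = \min S$.

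There is no real obstacle here: the argument rests only on the facts that the classes of a condensation are nonempty intervals and that the quotient order is defined so that $C' < C$ demands strict $<$ between every representative pair. The only subtlety worth flagging is the step where one passes from ``$x$ is in some class of $S$'' to ``that class is $\cc(x)$,'' which uses that distinct $\sim_\omega$-classes are disjoint. Neither the countability condition in the definition of $\sim_\omega$ nor any cardinality computation is needed; the same argument would prove the analogous statement for any condensation of a well-ordered linear order.
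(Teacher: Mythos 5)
Your proof is correct, but it takes a different route from the paper's. You argue directly from the definition of well-ordering: given a nonempty $S \subseteq \faktor{L}{\sim_\omega}$, you pass to the union $T$ of its classes, take $x = \min T$ in $L$, and verify that $\cc(x)$ belongs to $S$ and is its least element, using that the quotient order requires strict inequality between all representatives of distinct classes. The paper instead argues by contrapositive via the descending-sequence characterization: if $\faktor{L}{\sim_\omega}$ admitted an infinite strictly decreasing sequence of classes $\cc(x_0) > \cc(x_1) > \cdots$, then the representatives $x_0 > x_1 > \cdots$ would form an infinite descending sequence in $L$. The paper's argument is shorter but leans on the equivalence between ``well-ordered'' and ``no infinite descending sequence'' (and implicitly on choosing representatives); yours works straight from the least-element definition and needs no such equivalence or choices. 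Both arguments, as you observe, use nothing about countability and apply verbatim to any condensation of a well-ordered set.
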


\begin{proof}
Suppose $L$ is a linear order such that $\faktor{L}{\sim_\omega}$ is not well-ordered. Then there are $x_n \in L$, for $n \in \omega$, such that $\boldsymbol{c}_\omega(x_0) > \boldsymbol{c}_\omega(x_1) > \cdots > \boldsymbol{c}_\omega(x_n) > \boldsymbol{c}_\omega(x_{n+1}) > \cdots$ in $\faktor{L}{\sim_\omega}$. But then also $x_0 > x_1 > \cdots > x_n > x_{n+1} > \cdots$ in $L$, so that $L$ is not well-ordered.
\end{proof}

In fact, the map $\alpha \mapsto \ot(\faktor{\alpha}{\sim_\omega})$ is an endomorphism of $\ON$, in the sense of a weakly order-preserving class map on $\ON$.

\begin{prop}\label{cc is an endomorphism on ON}
Let $\alpha$ and $\beta$ be ordinals with $\alpha < \beta$. Then $\ot(\faktor{\alpha}{\sim_\omega}) \leq \ot(\faktor{\beta}{\sim_\omega})$. 
\end{prop}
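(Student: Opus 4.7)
My plan is to exploit the fact that since $\alpha < \beta$, the ordinal $\alpha$ is an initial segment of $\beta$, and then push this initial-segment inclusion down to the condensations.

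First I would define the candidate map $f : \faktor{\alpha}{\sim_\omega} \to \faktor{\beta}{\sim_\omega}$ by $f(\cc^\alpha(x)) := \cc^\beta(x)$, where I use superscripts to distinguish the ambient linear order in which $\sim_\omega$ is computed. The first thing to check is that $f$ is well-defined. Given $x, y \in \alpha$, the key point is that because $\alpha$ is downward closed in $\beta$, the interval $[\{x,y\}]$ is the same whether computed in $\alpha$ or in $\beta$. Hence $x \sim_\omega^\alpha y$ if and only if $x \sim_\omega^\beta y$ for $x, y \in \alpha$. This simultaneously gives well-definedness and injectivity of $f$. Order-preservation is immediate, since it is inherited from the underlying inclusion $\alpha \hookrightarrow \beta$ on representatives.

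Next I would verify that the image of $f$ is an initial segment of $\faktor{\beta}{\sim_\omega}$. Suppose $x \in \alpha$ and $\cc^\beta(y) < \cc^\beta(x)$ for some $y \in \beta$. Then $y < x$ in $\beta$, and since $\alpha$ is an initial segment of $\beta$ we have $y \in \alpha$; therefore $\cc^\beta(y) = f(\cc^\alpha(y))$ is in the image of $f$.

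Finally, I would invoke Lemma \ref{well ordered-ness inherited for cc} to conclude that $\faktor{\alpha}{\sim_\omega}$ and $\faktor{\beta}{\sim_\omega}$ are themselves well-ordered, so $f$ is an order-isomorphism onto an initial segment of the well-order $\faktor{\beta}{\sim_\omega}$; therefore $\ot(\faktor{\alpha}{\sim_\omega}) \leq \ot(\faktor{\beta}{\sim_\omega})$. I do not expect a significant obstacle here: the main subtlety is just being careful that the countable-condensation relation agrees on $\alpha$ whether it is computed intrinsically or as a sub-order of $\beta$, which follows from $\alpha$ being downward closed.
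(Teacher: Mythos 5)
Your proof is correct and takes essentially the same route as the paper's: the same map sending the class of $x$ in $\faktor{\alpha}{\sim_\omega}$ to its class in $\faktor{\beta}{\sim_\omega}$, with injectivity and order-preservation obtained from $\alpha$ being an initial segment of $\beta$, and the conclusion drawn from Lemma \ref{well ordered-ness inherited for cc}. Your additional observation that the image is an initial segment is a harmless extra; an order-preserving injection between well-orders already suffices to compare order types.
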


\begin{proof}
We know by Lemma \ref{well ordered-ness inherited for cc} that the countable condensation of a well-ordering is also well-ordered. Suppose $\alpha$ and $\beta$ are ordinals with $\alpha < \beta$.
Let $i$ denote the inclusion map from $\alpha$ to $\beta$, so that $i(x)$, for $x \in \alpha$, is its copy in $\beta$, and $i[\alpha]$ is the isomorphic copy of $\alpha$ inside of $\beta$.
Note that the equivalence class (modulo $\sim_\omega$) in $\alpha$ of an element $x \in \alpha$ might look different from the equivalence class in $\beta$ of $i(x)$. For this reason, for each $x \in \alpha$, denote $^\alpha \cc(x) = \{y \in \alpha: y \sim_\omega x\}$; and for each $x' \in \beta$, denote $^\beta \cc(x') = \{y \in \beta: y \sim_\omega x'\}$. Let $x \in \alpha$. If $^\beta \cc(i(x))$ is entirely contained in $i[\alpha]$, then $^\alpha \cc(x) \cong\, ^\beta \cc(i(x))$. If $^\beta \cc(i(x)) \not\subseteq i[\alpha]$ -- that is, if $^\beta \cc(i(x))$ intersects both $i[\alpha]$ and $\beta \setminus i[\alpha]$ -- then $^\alpha \cc(x) \cong\, ^\beta \cc(i(x)) \cap i[\alpha]$, since $\alpha$ is an initial segment of $\beta$. Thus $^\alpha \cc(x) \cong\, ^\beta \cc(i(x)) \cap i[\alpha]$ in any case.

Define a map $g:\faktor{\alpha}{\sim_\omega} \to \faktor{\beta}{\sim_\omega}$ by $g(^\alpha \cc(x)) =\, ^\beta \cc(i(x))$, for $x \in \alpha$. Then $g$ is injective: for if $^\beta \cc(i(x))=\,^\beta \cc(i(y))$ for some $x, y \in \alpha$, then $^\beta \cc(i(x)) \cap i[\alpha]=\,^\beta \cc(i(y)) \cap i[\alpha]$, so $^\alpha \cc(x) = \, ^\alpha \cc(y)$. (Observe that the map $g$ sends each interval $^\alpha \cc(x)$ in $\faktor{\alpha}{\sim_\omega}$ to its exact copy in $\beta$, except possibly for the very last one. That very last interval, say $^\alpha \cc(x)$ for some $x \in \alpha$,  might get sent by $g$ to a set containing some elements of $\beta \setminus i[\alpha]$.) Moreover, $g$ is order-preserving: for suppose $^\alpha \cc(x) <\, ^\alpha \cc(x')$ in $\faktor{\alpha}{\sim_\omega}$. First consider the case where $^\beta \cc(i(x)),\, ^\beta \cc(i(x')) \subseteq i[\alpha]$. Then $^\alpha \cc(x) \cong\, ^\beta \cc(i(x))$ and $^\alpha \cc(x') \cong\, ^\beta \cc(i(x'))$, so $g(^\alpha \cc(x)) =\, ^\beta \cc(i(x)) <\, ^\beta \cc(i(x')) = g(^\alpha \cc(x'))$. If it is not the case that both $^\beta \cc(x)$ and $^\beta \cc(x')$ are subsets of $i[\alpha]$, then necessarily $^\beta \cc(i(x)) \subseteq i[\alpha]$ and $^\beta \cc(i(x')) \not\subseteq i[\alpha]$. Observing that in this case $^\beta \cc(i(x')) \setminus i[\alpha] > i[\alpha]$, it again follows that $g(^\alpha \cc(x)) =\, ^\beta \cc(i(x)) <\, ^\beta \cc(i(x')) = g(^\alpha \cc (x'))$ in $\faktor{\beta}{\sim_\omega}$.

Therefore $g$ is an order-preserving injection from $\faktor{\alpha}{\sim_\omega}$ to $\faktor{\beta}{\sim_\omega}$, and since both of these sets are well-ordered by Lemma \ref{well ordered-ness inherited for cc}, the result follows.
\end{proof}

\section{The universal set for the countable condensation}\label{s: universal set for cc}

We would like to characterize the linear orders that condense to $1$ modulo the countable condensation. We immediately have that if $L$ is any countable linear order, then $\faktor{L}{\sim_\omega} \cong 1$ (for if $x, y \in L$, then $[\{x,y\}]$ is countable, as it is a subset of the countable set $L$, so $x \sim_\omega y$).  There are also many uncountable sets that reduce to $1$ modulo the countable condensation, though; for example, $\omega_1$ itself has this property.

\begin{lem}\label{omega_1 mod cc is 1}
$\faktor{\omega_1}{\sim_\omega} \cong 1$.
\end{lem}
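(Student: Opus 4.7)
The plan is to show that any two elements of $\omega_1$ are equivalent under $\sim_\omega$, so that $\omega_1$ has exactly one equivalence class and therefore $\faktor{\omega_1}{\sim_\omega} \cong 1$.

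First I would fix arbitrary $x, y \in \omega_1$ and, without loss of generality, assume $x \leq y$. The key observation is that every element of $\omega_1$ is, by definition of $\omega_1$, a countable ordinal. Thus $y$ itself is a countable ordinal, and so is $y+1$.

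Next, I would note that the interval $[\{x,y\}] = \{z \in \omega_1 : x \leq z \leq y\}$ is a subset of $y+1$ (viewed as the set of ordinals less than or equal to $y$). Since $y+1$ is a countable ordinal, it has countable cardinality as a set, and therefore $|[\{x,y\}]| \leq |y+1| \leq \aleph_0$. By Definition/Lemma \ref{countable condensation}, this gives $x \sim_\omega y$.

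Since $x$ and $y$ were arbitrary, all elements of $\omega_1$ lie in a single equivalence class modulo $\sim_\omega$, so $\faktor{\omega_1}{\sim_\omega} \cong 1$. There is no real obstacle here; the entire content is the observation that initial segments of $\omega_1$ are countable, which is just the definition of $\omega_1$.
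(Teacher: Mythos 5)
Your proof is correct and follows essentially the same route as the paper: both arguments fix two elements, observe that the interval between them sits inside an initial segment $[0,y]$ (your $y+1$), and use the fact that every proper initial segment of $\omega_1$ is countable. No issues.
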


\begin{proof}
Let $\alpha, \beta \in \omega_1$ with $\alpha < \beta$. As $\beta$ is countable, $[0, \beta]$ is countable. Since $[\alpha, \beta] \subseteq [0, \beta]$, the interval $[\alpha, \beta]$ is also countable, and so $\alpha \sim_\omega \beta$. Since any two elements of $\omega_1$ are in the same equivalence class modulo $\sim_\omega$, we have $\faktor{\omega_1}{\sim_\omega} \cong 1$.
\end{proof}

If $\alpha$ is any ordinal with $\alpha > \omega_1$, then $\faktor{\alpha}{\sim_\omega}$ has at least two elements. For example, the last element of the ordinal $\omega_1 + 1$ is not equivalent, modulo the countable condensation, to any element less than it, so that $\faktor{(\omega_1 + 1)}{\sim_\omega} \cong 2$ by Lemma \ref{omega_1 mod cc is 1}. 

Taking the countable condensation of the second uncountable ordinal results in a linear order of the same order type.

\begin{prop}\label{omega2 condenses to itself mod cc}
$\faktor{\omega_2}{\sim_\omega} \cong \omega_2$.
\end{prop}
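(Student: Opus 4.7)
The plan is to identify the $\sim_\omega$-equivalence classes of $\omega_2$ explicitly as the half-open ``blocks'' $I_\mu := [\mu\omega_1,\,(\mu+1)\omega_1)$ for $\mu < \omega_2$, and read off the isomorphism from there. Using the paper's lexicographic convention, the ordinal division algorithm lets me write each $\alpha < \omega_2$ uniquely as $\alpha = \mu\omega_1 + \rho$ with $\mu < \omega_2$ and $\rho < \omega_1$; the index $\mu$ ranges over all of $\omega_2$ because $\omega_2\omega_1 = \omega_2$ (each $\mu\omega_1$ with $\mu < \omega_2$ has cardinality at most $\aleph_1$ and so lies strictly below $\omega_2$, while the $\mu\omega_1$ are cofinal in $\omega_2$). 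Thus $\{I_\mu : \mu < \omega_2\}$ partitions $\omega_2$ into $\omega_2$ consecutive intervals, each of order type $\omega_1$.

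Next I would verify that $\cc(\alpha) = I_\mu$ whenever $\alpha \in I_\mu$. For $I_\mu \subseteq \cc(\alpha)$: if $\beta = \mu\omega_1 + \rho' \in I_\mu$, then (WLOG $\rho \leq \rho'$) the interval $[\{\alpha,\beta\}]$ in $\omega_2$ is order-isomorphic to $[\rho,\rho'] \subseteq \omega_1$, which is countable, witnessing $\alpha \sim_\omega \beta$. For the reverse inclusion: if $\beta \in I_\nu$ with $\mu < \nu$, then $[\alpha,\beta]$ contains $[\mu\omega_1 + \rho,\,(\mu+1)\omega_1)$, an interval of order type $\omega_1$ (since $\rho + \omega_1 = \omega_1$ by the additive indecomposability of $\omega_1$) and hence of cardinality $\aleph_1$; so $\alpha \not\sim_\omega \beta$, and the case $\nu < \mu$ is symmetric.

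Combining these, the assignment $\cc(\alpha) \mapsto \mu$, where $\alpha \in I_\mu$, is a well-defined, order-preserving bijection from $\omega_2/\sim_\omega$ onto $\omega_2$, as required. I do not foresee a substantive obstacle: the only nontrivial ingredient is the additive indecomposability of $\omega_1$, which simultaneously makes each block internally $\sim_\omega$-connected and forces any interval crossing a block boundary to be uncountable; the rest is keeping straight the paper's lexicographic convention against the standard one for ordinal products.
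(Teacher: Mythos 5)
Your proof is correct, and it takes a somewhat different route from the paper's. Both arguments hinge on the same family of ordinals $\{\mu\omega_1 : \mu < \omega_2\}$, but you go further: you invoke the division algorithm to partition $\omega_2$ into the blocks $I_\mu = [\mu\omega_1, (\mu+1)\omega_1)$ and prove that these blocks are \emph{exactly} the $\sim_\omega$-classes (the additive indecomposability of $\omega_1$ doing the work in both directions), after which the isomorphism $\cc(\alpha) \mapsto \mu$ is immediate. The paper instead only shows that the classes $\cc(\mu\omega_1)$, $\mu < \omega_2$, are pairwise distinct, concluding that $\faktor{\omega_2}{\sim_\omega}$ is a well-order containing a suborder of type $\omega_2$; it then pins down the order type by noting that a condensation always embeds into the original order, so no suborder of type $\omega_2+1$ can occur. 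Your approach buys an explicit description of the condensation map and avoids the embedding/sandwich step; the paper's approach is shorter because it never needs to verify that a whole block is contained in a single class. Both are valid, and your only obligations beyond the paper's are the routine checks you already flag: that $\mu$ ranges over all of $\omega_2$ and that intervals within a block are countable.
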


\begin{proof}
By Lemma \ref{well ordered-ness inherited for cc}, $\faktor{\omega_2}{\sim_\omega}$ is isomorphic to an ordinal. Consider the sequence $\langle \alpha \omega_1: \alpha < \omega_2 \rangle$. This sequence is cofinal in $\omega_2$, for if $\alpha < \omega_2$ then $\alpha \leq \alpha \omega_1 < \omega_2$. For any $\alpha < \beta < \omega_2$, $|[\alpha \omega_1, \beta \omega_1]| = \omega_1$, so $\alpha \omega_1 \not\sim_\omega \beta \omega_1$. 
Thus the sequence $\langle \alpha \omega_1: \alpha < \omega_2 \rangle$ is strictly increasing, and so it has length $\omega_2$ as $\omega_2$ is regular. This means that there are at least $\aleph_2$-many equivalence classes modulo $\sim_\omega$ in $\omega_2$, namely $\{\boldsymbol{c}_\omega(\alpha \omega_1): \alpha < \omega_2\}$, so $\left|\faktor{\omega_2}{\sim_\omega}\right| \geq |\aleph_2|$. Since $\left|\faktor{\omega_2}{\sim_\omega}\right| \leq |\omega_2|$, we have $\left|\faktor{\omega_2}{\sim_\omega}\right| = \aleph_2$.

We have that if $\alpha < \beta < \omega_2$ then $\boldsymbol{c}_\omega(\alpha \omega_1) < \boldsymbol{c}_\omega(\beta \omega_1)$ in $\faktor{\omega_2}{\sim_\omega}$. That is, the classes $\{\boldsymbol{c}_\omega(\alpha \omega_1): \alpha < \omega_2\}$ form a suborder of $\faktor{\omega_2}{\sim_\omega}$ of type $\omega_2$. There is no suborder of $\faktor{\omega_2}{\sim_\omega}$ of order type $\omega_2 + 1$, for certainly any condensation of a linear order will be embeddable in that linear order. Thus the order type of $\faktor{\omega_2}{\sim_\omega}$ must be $\omega_2$.
\end{proof}

Note that Proposition \ref{omega2 condenses to itself mod cc} does not mean that the countable condensation does nothing to $\omega_2$; it simply means that performing this condensation on $\omega_2$ results in a linear order with the same order type. Convex $\omega$-sequences in $\omega_2$ do get condensed down to a point modulo the countable condensation. However, convex $\omega_1$-sequences in $\omega_2$ do not get condensed to a single point.

Recall the following theorem of Cantor, which states that the rationals are universal for countable linear orders.

\begin{thm}[Cantor]\label{rationals are universal}
Any countable linear order is isomorphic to a subordering of $\Q$. \hfill \qed  
\end{thm}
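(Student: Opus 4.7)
The plan is to use a one-sided back-and-forth (just the ``forth'' direction) to construct an order-embedding of an arbitrary countable linear order into $\Q$. Let $L$ be countable and fix an enumeration $L = \{l_0, l_1, l_2, \ldots\}$ (allowing for $L$ finite, in which case the construction terminates, which is an easier subcase). I will recursively define an injective order-preserving map $f : L \to \Q$ by specifying $f(l_n)$ at stage $n$ so that $f \restriction \{l_0, \ldots, l_n\}$ is order-preserving into $\Q$.

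At stage $0$, set $f(l_0)$ to be any rational, say $0$. Suppose at stage $n$ that $f(l_0), \ldots, f(l_{n-1})$ have been defined and are order-preserving. To define $f(l_n)$, consider how $l_n$ sits with respect to $l_0, \ldots, l_{n-1}$ in $L$: either $l_n$ is greater than all of them, less than all of them, or falls strictly between some $l_i$ (the largest below $l_n$) and some $l_j$ (the smallest above $l_n$). In the first case, since $\Q$ has no greatest element, choose $f(l_n) \in \Q$ greater than $\max\{f(l_0), \ldots, f(l_{n-1})\}$; symmetrically in the second case using that $\Q$ has no least element. In the third case, since $\Q$ is densely ordered, choose $f(l_n)$ strictly between $f(l_i)$ and $f(l_j)$.

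This produces $f: L \to \Q$ which is order-preserving on every finite initial segment of the enumeration, hence order-preserving on all of $L$ (since for any $x,y \in L$ with $x < y$, both appear at some finite stage). Injectivity follows from the strict comparisons made at each stage. Thus $f$ exhibits $L$ as isomorphic to its image, a subordering of $\Q$.

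There is no real obstacle here beyond bookkeeping: the only properties of $\Q$ used are density, the absence of endpoints, and that $\Q$ has enough elements to carry out countably many such choices, all of which are standard. The enumeration of $L$ is available precisely because $L$ is countable, and the recursion has length $\omega$ (or a finite ordinal), so no choice principle beyond what is already in ZFC is needed.
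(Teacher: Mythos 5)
Your proof is correct: the one-sided ``forth'' construction, using only the density and lack of endpoints of $\Q$ together with an enumeration of $L$, is the standard argument, and your handling of the three cases (above all, below all, or strictly between nearest neighbors among $l_0,\dots,l_{n-1}$) is exactly what is needed. The paper states this result as Cantor's classical theorem and gives no proof of its own, so there is nothing to compare against; your argument correctly supplies the omitted details.
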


We next define a linear order $U$ that, as we will show in Theorem \ref{main result, universal set}, is universal for linear orders $L$ such that $\faktor{L}{\sim_\omega} \cong 1$. To see what form such a $U$ must take, consider a linear order $L$ that has only a single equivalence class modulo the countable condensation, and suppose for simplicity that $L$ has a least element. The intuition here is that $L$ must have a cofinal ``spine'' of order type at most $\omega_1$ with the ``gaps'' filled-in with some sort of countable linear order. We will map this spine to the strictly increasing cofinal sequence $\{u_\alpha: \alpha < \omega_1\}$ of $U$ (see Definition \ref{d: alternate rational long line}), and -- by Theorem \ref{rationals are universal} -- we can map each of those countable linear orders in the ``gaps'' into the copy of $\Q$ that we will insert between $u_\alpha$ and $u_{\alpha+1}$ in $U$.

\begin{defn}\label{d: alternate rational long line}
Let $\{u_\alpha: \alpha < \omega_1\}$ have order type $\omega_1$, and let $\{-u_\alpha: \alpha < \omega_1\}$ have order type $\omega_1^*$. For $\alpha < \omega_1$, let $\Q(\alpha)$ and $\Q(-\alpha)$ be isomorphic copies of $\Q$; and let $\Q(\textrm{mid})$ be another isomorphic copy of $\Q$. We define a linear order on the point set 
\[
U := \{-u_\alpha: \alpha \in \omega_1\} \cup \{u_\alpha: \alpha \in \omega_1\} \cup \left( \bigcup_{\alpha \in \omega_1} \Q(\alpha) \right) \cup \left( \bigcup_{\alpha \in \omega_1} \Q(-\alpha) \right) \cup \Q(\textrm{mid})
\]
as follows: first, we declare $\{-u_\alpha: \alpha < \omega_1\} < \{u_\alpha: \alpha < \omega_1\}$. For $\alpha < \beta < \omega_1$, we declare $\Q(-\beta) < \Q(-\alpha) < \Q(\textrm{mid}) <  \Q(\alpha) < \Q(\beta)$. For $\alpha < \omega_1$, we declare $u_\alpha < \Q(\alpha) < u_{\alpha + 1}$ and $-u_{\alpha+1} < \Q(-u_\alpha) < -u_\alpha$. Finally, we declare $-u_0 < \Q(\textrm{mid}) < u_0.$ We call this linear order $U$ the \textbf{$\omega_1$-lengthened rational line.}
\end{defn}

\begin{prop}\label{U condenses to 1 under cc}
$\faktor{U}{\sim_\omega} \cong 1$.
\end{prop}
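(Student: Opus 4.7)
The plan is to show that any two points of $U$ lie in a common countable interval, so all of $U$ is a single $\sim_\omega$-equivalence class. The key step will be showing that for each countable ordinal $\alpha$, the interval $[-u_\alpha, u_\alpha]$ in $U$ is countable; once this is established, the conclusion follows immediately because every element of $U$ sits inside $[-u_\beta, u_\beta]$ for some $\beta < \omega_1$ (for instance, a point of $\Q(\gamma)$ sits between $u_\gamma$ and $u_{\gamma+1}$, a point of $\Q(-\gamma)$ sits between $-u_{\gamma+1}$ and $-u_\gamma$, and a point of $\Q(\textrm{mid})$ sits between $-u_0$ and $u_0$), so given two elements $x,y \in U$ one can choose a common $\alpha < \omega_1$ with $x, y \in [-u_\alpha, u_\alpha]$, giving $[\{x,y\}] \subseteq [-u_\alpha, u_\alpha]$.

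To prove the key step, I would fix $\alpha < \omega_1$ and write down an explicit decomposition:
\[
[-u_\alpha, u_\alpha] \;=\; \{-u_\beta : \beta \leq \alpha\} \;\cup\; \{u_\beta : \beta \leq \alpha\} \;\cup\; \bigcup_{\beta < \alpha} \Q(-\beta) \;\cup\; \bigcup_{\beta < \alpha} \Q(\beta) \;\cup\; \Q(\textrm{mid}).
\]
Since $\alpha$ is a countable ordinal, each of the five pieces on the right-hand side is countable (the first two as subsets of the countable ordinal $\alpha+1$, the middle two as countable unions of countable sets, and $\Q(\textrm{mid})$ because it is a copy of $\Q$), so their union is countable.

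I do not expect a serious obstacle here. The only place that needs a little care is verifying that the decomposition above is correct, i.e.\ that no element of $U$ whose label involves an ordinal $\beta \geq \alpha$ (or that lives in $\Q(\gamma)$ or $\Q(-\gamma)$ for some $\gamma \geq \alpha$) can slip into the interval $[-u_\alpha, u_\alpha]$; this is immediate from the order relations stipulated in Definition \ref{d: alternate rational long line}, since $u_\alpha < \Q(\alpha) < u_{\alpha+1} < \Q(\beta)$ for $\beta > \alpha$, and similarly on the negative side. Once the decomposition is in hand, applying transitivity of $\sim_\omega$ (Lemma/Definition \ref{countable condensation}) collapses every pair of elements of $U$ into a single $\sim_\omega$-class, giving $\faktor{U}{\sim_\omega} \cong 1$.
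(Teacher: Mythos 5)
Your argument is correct and rests on the same counting as the paper's proof (any two points of $U$ are separated by at most countably many countable blocks); the only difference is organizational, in that you package the count as a nested exhaustion of $U$ by the countable intervals $[-u_\alpha, u_\alpha]$, whereas the paper runs a direct case analysis on which blocks $x$ and $y$ inhabit. Your closing appeal to transitivity is unnecessary --- the inclusion $[\{x,y\}] \subseteq [-u_\alpha, u_\alpha]$ already shows $x \sim_\omega y$ directly --- but this is harmless.
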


\begin{proof}
For convenience, we write $\omega_1^* = \{-\alpha: \alpha < \omega_1\}$. Let $x, y \in U$; we show that $[x,y]$ is countable. There are several cases.

First suppose $x, y \in \Q(\alpha)$ for some $\alpha < \omega_1$. Then $[x,y] \subseteq \Q(\alpha)$, which is countable. The same reasoning works if $x, y \in \Q(-\alpha)$ for some $\alpha$ or if $x, y \in \Q(mid)$.

Next suppose $x \in \Q(\alpha)$ and $y \in \Q(\beta)$ for some $\alpha <  \beta < \omega_1$. Then $|[x,y]| \leq |\Q||\beta| \leq \aleph_0 \aleph_0 = \aleph_0.$ The same reasoning works if $x \in \Q(-\alpha)$ and $y \in \Q(-\beta)$ for some $\alpha < \beta < \omega_1$.

Finally, suppose $x \in \Q(-\alpha)$ and $y \in \Q(\beta)$ for some $\alpha, \beta < \omega_1$. Then $|[x,y]| \leq 2 |\Q| \max\{\alpha, \beta\} \leq \aleph_0  \aleph_0 = \aleph_0.$

Thus in all cases $[x,y]$ is countable. Therefore $x \sim_\omega y$ for all $x, y \in U$, so that $\faktor{U}{\sim_\omega} \cong 1$.
\end{proof}

\begin{prop}\label{things in U condense to 1 under cc}
If $L$ is isomorphic to a suborder of $U$, then $\faktor{L}{\sim_\omega} \cong 1$.
\end{prop}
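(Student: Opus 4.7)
The plan is to piggyback directly on Proposition \ref{U condenses to 1 under cc}, using the fact that the countable condensation behaves monotonically with respect to suborders: if $L$ sits inside a larger linear order $M$, then intervals computed in $L$ are contained in the corresponding intervals computed in $M$, so countability passes downward.

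First I would fix an order-isomorphism $\varphi: L \to \varphi[L] \subseteq U$ witnessing that $L$ is isomorphic to a suborder of $U$, and pick arbitrary $x, y \in L$. Writing $[\{x,y\}]_L$ for the interval computed inside $L$ and $[\{\varphi(x), \varphi(y)\}]_U$ for the interval computed inside $U$, the key observation is that $\varphi$ maps $[\{x,y\}]_L$ injectively into $[\{\varphi(x), \varphi(y)\}]_U$: any $z \in L$ lying between $x$ and $y$ in $L$ satisfies $\varphi(z)$ lying between $\varphi(x)$ and $\varphi(y)$ in $U$, since $\varphi$ is order-preserving.

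Next I would invoke Proposition \ref{U condenses to 1 under cc} to conclude that $[\{\varphi(x), \varphi(y)\}]_U$ is countable (this is precisely the content of $\varphi(x) \sim_\omega \varphi(y)$ inside $U$). Since $[\{x,y\}]_L$ injects into a countable set, it is countable itself, so $x \sim_\omega y$ in $L$. As $x, y \in L$ were arbitrary, every pair of elements of $L$ lies in the same $\sim_\omega$-equivalence class, and hence $\faktor{L}{\sim_\omega} \cong 1$.

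There is really no serious obstacle here; the argument is just the monotonicity remark that the $\sim_\omega$ relation, being defined by an upper bound on the cardinality of intervals, is preserved under passing to suborders. The only thing worth flagging is that the interval $[\{x,y\}]$ in Definition \ref{countable condensation} is ambient-dependent, which is why it matters to distinguish $[\{x,y\}]_L$ from $[\{\varphi(x), \varphi(y)\}]_U$ and to note the containment between them; once this is pointed out, the proof is a one-line consequence of Proposition \ref{U condenses to 1 under cc}.
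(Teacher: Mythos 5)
Your proof is correct and follows essentially the same route as the paper: identify $L$ with a suborder of $U$, observe that the interval $[\{x,y\}]$ computed in $L$ injects into the corresponding interval in $U$, and apply Proposition \ref{U condenses to 1 under cc} to conclude countability. The extra care you take in distinguishing the ambient-dependent intervals via the isomorphism $\varphi$ is a fine (if slightly more verbose) way of making precise what the paper does by simply assuming $L \subseteq U$ with the induced order.
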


\begin{proof}
Suppose $L \subseteq U$ with the induced order. For $x, y \in L$, $[\{x,y\}]_L \subseteq [\{x,y\}]_U$. By Proposition \ref{U condenses to 1 under cc}, $|[\{x,y\}]_U| \leq \omega$, so also $|[\{x,y\}]_L| \leq \omega$, so  $x \sim_\omega y.$
\end{proof}

\begin{lem}\label{bound on monotone sequences}
If $L$ is any linear order such that $\faktor{L}{\sim_\omega} \cong 1$, then any strictly increasing (resp.\ decreasing) sequence in $L$ must be of order type at most $\omega_1$ (resp.\ $\omega_1^*)$.
\end{lem}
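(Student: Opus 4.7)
The plan is to argue by contradiction, using the observation that any strictly increasing sequence of length greater than $\omega_1$ must contain, between two particular points of $L$, an uncountable collection of terms, forcing those two points to be inequivalent modulo $\sim_\omega$.

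More precisely, I would first handle the increasing case. Suppose for contradiction that $L$ contains a strictly increasing sequence of order type strictly greater than $\omega_1$. Then $L$ contains a strictly increasing sequence $\{x_\alpha : \alpha \leq \omega_1\}$ of order type $\omega_1 + 1$ (take the initial segment of length $\omega_1+1$). Consider the interval $[x_0, x_{\omega_1}]$ in $L$. Because the sequence is strictly increasing, for every $\alpha$ with $0 \leq \alpha < \omega_1$ we have $x_0 \leq x_\alpha < x_{\omega_1}$, so $x_\alpha \in [x_0, x_{\omega_1}]$. The assignment $\alpha \mapsto x_\alpha$ is injective, so $|[x_0, x_{\omega_1}]| \geq \aleph_1$. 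Hence $x_0 \not\sim_\omega x_{\omega_1}$, contradicting the hypothesis that $\faktor{L}{\sim_\omega} \cong 1$.

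For the decreasing case I would simply apply the previous argument to the reverse order $L^*$: a strictly decreasing sequence in $L$ of order type $> \omega_1^*$ is the same as a strictly increasing sequence in $L^*$ of order type $> \omega_1$, and since the relation $\sim_\omega$ depends only on the underlying unordered pair $\{x,y\}$, we have $\faktor{L^*}{\sim_\omega} \cong 1$ as well, so the first part applies.

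There is no real obstacle here; the entire content is the cardinality estimate $|[x_0,x_{\omega_1}]| \geq \aleph_1$, and the only subtlety is remembering that an order type exceeding $\omega_1$ (as an ordinal) automatically produces an initial segment of type $\omega_1+1$, i.e.\ an element of the sequence that is strictly above all the first $\aleph_1$-many of its predecessors. This is exactly what fails, for instance, for $\omega_1$ itself, and is precisely why Lemma \ref{omega_1 mod cc is 1} does not yield a contradiction.
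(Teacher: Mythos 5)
Your proof is correct and follows essentially the same route as the paper's: both extract an initial segment $\langle x_\alpha : \alpha \leq \omega_1 \rangle$ of type $\omega_1+1$ from any too-long increasing sequence and observe that the uncountably many terms $x_\alpha$ lying in $[x_0, x_{\omega_1}]$ force $x_0 \not\sim_\omega x_{\omega_1}$. The only cosmetic differences are that you phrase it as a contradiction rather than a contrapositive and dispatch the decreasing case via $L^*$ instead of repeating the argument; both are fine.
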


\begin{proof}
We argue by contrapositive. Let $L$ be a linear order, and suppose that there is a strictly increasing sequence $\langle x_\alpha: \alpha < \omega_1 + 1 \rangle$ in $L$ of order type $\omega_1+1$. Since there are $\omega_1$-many elements of $L$ between $x_0$ and $x_{\omega_1}$, we have $x_0 \not\sim_{\omega} x_{\omega_1}$, and so $\faktor{L}{\sim_\omega}$ has at least $2$ elements. (A similar argument shows that $\faktor{L}{\sim_\omega}$ has at least $2$ elements if there is a strictly decreasing sequence of type $(\omega_1 + 1)^*$ in $L$.)
\end{proof}

\begin{cor}\label{cf at most omega 1 when L condenses to  1}
If $L$ is a linear order such that $\faktor{L}{\sim_\omega} \cong 1$, then $\cf(L) \leq \omega_1$ and $\cf^*(L) \leq \omega_1$.
\end{cor}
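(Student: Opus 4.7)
The plan is to deduce the corollary directly from Lemma \ref{bound on monotone sequences} via a greedy transfinite construction of a cofinal (respectively coinitial) sequence.

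First I would handle the cofinality. If $L$ has a greatest element, then $\cf(L) = 1 \leq \omega_1$ and we are done, so assume $L$ has no maximum. Define by transfinite recursion a sequence as follows: pick any $x_0 \in L$; having defined $\{x_\beta : \beta < \alpha\}$, if this set is cofinal in $L$, halt, and otherwise pick any $x_\alpha \in L$ strictly greater than every previously chosen $x_\beta$ (this is possible since the set is not cofinal and has no upper bound equal to a maximum of $L$). The resulting sequence is strictly increasing.

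Next I would argue that the construction must terminate at some stage $\alpha \leq \omega_1$. If it did not, then $x_\alpha$ would be defined for every $\alpha < \omega_1 + 1$, yielding a strictly increasing sequence in $L$ of order type $\omega_1 + 1$, in direct contradiction to Lemma \ref{bound on monotone sequences}. Hence the construction halts at some $\alpha \leq \omega_1$, producing a strictly increasing cofinal sequence of length at most $\omega_1$, so $\cf(L) \leq \omega_1$.

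The inequality $\cf^*(L) \leq \omega_1$ follows by the symmetric argument (construct a strictly decreasing coinitial sequence and invoke the $\omega_1^*$ bound in Lemma \ref{bound on monotone sequences}), or equivalently by applying the cofinality case to $L^*$ after noting that $\faktor{L^*}{\sim_\omega} \cong 1$. There is no real obstacle here: the heavy lifting is already done by Lemma \ref{bound on monotone sequences}, and the only care needed is the trivial edge case of $L$ having a maximum or minimum element.
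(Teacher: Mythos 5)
Your proposal is correct and follows essentially the same route as the paper: both deduce the bound from Lemma \ref{bound on monotone sequences} by observing that a cofinality (or coinitiality) exceeding $\omega_1$ would force a strictly increasing (or decreasing) sequence of order type $\omega_1+1$. The only difference is that you build the cofinal sequence explicitly by greedy recursion, whereas the paper appeals directly to the definition of $\cf(L)$ as the least length of a strictly increasing cofinal sequence.
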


\begin{proof}
If $\cf(L)$ were greater than $\omega_1$, then $L$ would have a strictly increasing sequence of size greater than $\omega_1$; but this is not possible, by Lemma \ref{bound on monotone sequences}. Thus $\cf(L) \leq \omega_1$. A similar argument shows that $\cf^*(L) \leq \omega_1$.
\end{proof}

\begin{prop}\label{if L condenses to 1 mod cc then L has size at most omega1}
If $L$ is any linear order such that $\faktor{L}{\sim_\omega} \cong 1$, then $|L| \leq \aleph_1$.
\end{prop}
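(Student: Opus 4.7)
The plan is to cover $L$ by at most $\aleph_1$-many countable intervals, using the bounds on cofinality and coinitiality already established in Corollary \ref{cf at most omega 1 when L condenses to  1}.

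First, I would invoke that corollary to obtain a cofinal subset $A = \{a_\alpha : \alpha < \kappa\} \subseteq L$ with $\kappa \leq \omega_1$ and a coinitial subset $B = \{b_\beta : \beta < \lambda\} \subseteq L$ with $\lambda \leq \omega_1$ (allowing the degenerate case of a one-element sequence when $L$ has a greatest or least element). Then for any $x \in L$, coinitiality gives some $\beta$ with $b_\beta \leq x$ and cofinality gives some $\alpha$ with $x \leq a_\alpha$, so $x \in [b_\beta, a_\alpha]$. In particular, the family of intervals $\{[b_\beta, a_\alpha] : \alpha < \kappa,\, \beta < \lambda,\, b_\beta \leq a_\alpha\}$ covers $L$.

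Next, because $\faktor{L}{\sim_\omega} \cong 1$, any two elements of $L$ lie in the same $\sim_\omega$-class, so for every pair $(\alpha, \beta)$ appearing in our index set we have $b_\beta \sim_\omega a_\alpha$, which means $[b_\beta, a_\alpha]$ is countable. Thus $L$ is expressed as a union of at most $\kappa \cdot \lambda \leq \aleph_1 \cdot \aleph_1 = \aleph_1$ countable sets, yielding
\[
|L| \leq \aleph_1 \cdot \aleph_0 = \aleph_1,
\]
as required.

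There is no substantive obstacle here; the only minor care is accommodating the cases where $L$ has endpoints (so that one of the sequences may have length $1$ rather than being strictly monotone of length $\omega_1$), but the cofinality/coinitiality notation handles this uniformly.
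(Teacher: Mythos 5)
Your proof is correct and is essentially the paper's own argument: both cover $L$ by at most $\aleph_1$-many intervals that are countable because their endpoints are $\sim_\omega$-equivalent, with the endpoints drawn from cofinal/coinitial sequences of length at most $\omega_1$ supplied by Corollary \ref{cf at most omega 1 when L condenses to  1}, and then conclude by the cardinal computation $\aleph_1 \cdot \aleph_0 = \aleph_1$. The only difference is cosmetic: the paper splits into four cases according to whether $L$ has a first and/or last element and anchors its intervals at a single point, whereas you avoid the case analysis by using the doubly-indexed cover $\{[b_\beta, a_\alpha]\}$.
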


\begin{proof}
Let $L$ be a linear order with $\faktor{L}{\sim_\omega} \cong 1$. 

\textit{Case 1:} If $L$ has both a first element $x$ and a last element $y$, then since $x \sim_\omega y$, we have $|L| = |[x,y]| \leq \aleph_0$.

\textit{Case 2:} Suppose $L$ has a first element $x_0$ but no last element. Let $\langle x_\alpha: \alpha < \cf(L) \rangle$ be cofinal in $L$. Then for each $l \in L$, there is an $\alpha < \cf(L)$ such that $x_0 \leq l < x_\alpha$, so $L=\bigcup_{\alpha < \cf(L)}[x_0, x_\alpha)$. Then, noting that $\faktor{L}{\sim_\omega} \cong 1$ implies that $x_0 \sim_\omega x_\alpha$ for each $\alpha < \cf(L)$, we have 
\[
|L| = \left| \bigcup_{\alpha < \cf(L)} [x_0, x_\alpha) \right| \leq \sum_{\alpha < \cf(L)}|[x_0, x_\alpha)| \leq \sum_{\alpha < \cf(L)} \aleph_0 \leq \sum_{\alpha < \omega_1} \aleph_0 = \aleph_1
\]
as $\cf(L) \leq \omega_1$ by Corollary \ref{cf at most omega 1 when L condenses to  1}.

\textit{Case 3:} Suppose $L$ has a last element $x_0$ but no first element. In this case, we can find a coinitial sequence $\langle x_\alpha: \alpha < \cf^*(L) \rangle$ in $L$ of length at most $\omega_1$, and we can write $L=\bigcup_{\alpha < \cf^*(L)}(x_\alpha, x_0]$. Then since $x_\alpha \sim_\omega x_0$ for each $\alpha < \cf^*(L)$, we again have 
\[
|L| = \left| \bigcup_{\alpha < \cf^*(L)} (x_\alpha, x_0] \right| \leq \sum_{\alpha < \cf^*(L)}|(x_\alpha, x_0]| \leq \sum_{\alpha < \cf^*(L)} \aleph_0 \leq \sum_{\alpha < \omega_1} \aleph_0 = \aleph_1.
\]

\textit{Case 4:} Suppose $L$ has neither a first element nor a last element. Choose any $x_0 \in L$. By Case 2, $|[x_0, +\infty)| \leq \aleph_1$; and by Case 3, $|(-\infty, x_0]| \leq \aleph_1$; so $|L| = |(-\infty, x_0] \cup [x_0, +\infty)| \leq \aleph_1 + \aleph_1 = \aleph_1$. 

Therefore in all cases we have $|L| \leq \aleph_1$.

\end{proof}

Note that the converse of Proposition \ref{if L condenses to 1 mod cc then L has size at most omega1} does not hold; for example, $\faktor{(\omega_1 + 1)}{\sim_\omega} \cong 2$ even though $|\omega_1 + 1| = \aleph_1$. 

\begin{lem}\label{uncountable suborders of U have cf at least omega1}
If $L$ is a linear order with $\faktor{L}{\sim_\omega} \cong 1$ and $L$ is uncountable, then $\cf(L)=\omega_1$ or $\cf^*(L)=\omega_1$.
\end{lem}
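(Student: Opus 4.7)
The plan is to argue by contrapositive: assume that $\cf(L) \neq \omega_1$ and $\cf^*(L) \neq \omega_1$, and deduce that $L$ must be countable. Since Corollary \ref{cf at most omega 1 when L condenses to  1} already gives $\cf(L) \leq \omega_1$ and $\cf^*(L) \leq \omega_1$, and since any cofinality (being the length of a shortest strictly increasing cofinal sequence) is either $1$ or a regular cardinal, the remaining options for each are $1$ or $\omega$. I would then split into four cases depending on whether $L$ has a least element (i.e.\ $\cf^*(L)=1$) or not (i.e.\ $\cf^*(L)=\omega$), and similarly at the top.

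In each case the strategy is to express $L$ as a countable union of intervals of the form $[\{a,b\}]$ with $a \sim_\omega b$, because the hypothesis $\faktor{L}{\sim_\omega} \cong 1$ then forces each such interval to be countable, and a countable union of countable sets is countable. Concretely, if $L$ has both endpoints, write $L=[x,y]$ directly. If $L$ has a least element $x_0$ but no greatest, fix an $\omega$-cofinal sequence $\langle x_n \rangle_{n \in \omega}$ in $L$ and write $L = \bigcup_{n \in \omega}[x_0, x_n]$. The symmetric situation handles the case with a greatest element but no least. If $L$ has neither, fix some $x_0 \in L$ and apply the same argument on each side, combining $\omega$-cofinal and $\omega$-coinitial sequences to cover $L$ by countably many countable intervals.

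I expect no real obstacle here; the argument closely parallels Proposition \ref{if L condenses to 1 mod cc then L has size at most omega1}, with the only substantive point being the observation that once $\cf(L)$ and $\cf^*(L)$ are both bounded by $\omega$ (rather than $\omega_1$), the indexing sets of the unions become countable, upgrading the bound on $|L|$ from $\aleph_1$ to $\aleph_0$. The only thing worth being careful about is ruling out intermediate ordinal values of $\cf(L)$: this is immediate from the standard fact that any cofinality is $1$ or a regular infinite cardinal, together with Corollary \ref{cf at most omega 1 when L condenses to  1}. The conclusion is then that the contrapositive assumption forces $|L| \leq \aleph_0$, contradicting uncountability of $L$, so $\cf(L)=\omega_1$ or $\cf^*(L)=\omega_1$.
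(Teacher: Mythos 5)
Your proposal is correct and follows essentially the same route as the paper: argue by contrapositive, use Corollary \ref{cf at most omega 1 when L condenses to  1} to conclude that both $\cf(L)$ and $\cf^*(L)$ are countable, and then in each of the four endpoint cases cover $L$ by countably many intervals of the form $[\{a,b\}]$, each countable because $\faktor{L}{\sim_\omega} \cong 1$. The extra remark about cofinalities being $1$ or regular is harmless but not needed, since the paper (and your argument) only uses that a countable cofinality yields a cofinal sequence of length at most $\omega$.
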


\begin{proof}
We argue by contrapositive. Suppose $\faktor{L}{\sim_\omega} \cong 1$, and suppose that $\cf(L) \neq \omega_1$ and $\cf^*(L) \neq \omega_1$. By Corollary \ref{cf at most omega 1 when L condenses to  1}, this means that $\cf(L)$ and $\cf^*(L)$ are both countable. 
There are several cases.

If $L$ has both a first element $x$ and a last element $y$, then $|L| = |[x,y]| \leq \aleph_0$ because $x \sim_\omega y$. 

Suppose $L$ has a first element $x_0$ but no last element, so that (since we are assuming that $\cf(L)$ is countable) we can find a strictly increasing cofinal sequence $\langle x_n: n < \omega \rangle$ in $L$. Then for each $l \in L$, there is an $n<\omega$ such that $l \in [x_0, x_n)$; and also, for each $n<\omega$, the interval $[x_0,x_n)$ is countable, as we are assuming that $\faktor{L}{\sim_\omega} \cong 1$. Therefore 
\[
|L|= \left| \bigcup_{n<\omega} [x_0, x_n) \right| \leq \sum_{n<\omega} |[x_0, x_n)| \leq \sum_{n<\omega}\aleph_0 = \aleph_0. 
\]

Next, suppose that $L$ has a last element but no first element. In this case we can find a countable strictly decreasing coinitial sequence $\langle y_n:n<\omega \rangle$ in $L$ and write $L=\bigcup_{n<\omega}(y_n,y_0]$. Then, since each of the intervals $(y_n,y_0]$ must be countable, we have
\[
|L|= \left| \bigcup_{n<\omega} (y_n, y_0] \right| \leq \sum_{n<\omega} |(y_n, y_0]| \leq \sum_{n<\omega}\aleph_0 = \aleph_0. 
\]

Finally, suppose $L$ has neither a first element nor a last element, so that $\cf(L)=\omega$ and $\cf^*(L)=\omega$. Let $\langle x_n:n<\omega \rangle$ be increasing and cofinal in $L$, and let $\langle y_n:n<\omega \rangle$ be decreasing and coinitial in $L$, and suppose $x_0=y_0$. (We may arrange this, adding an extra element to the beginning of a cofinal or coinitial sequence as necessary.) We then have 
\begin{align*}
|L| & = |(-\infty, x_0] \cup [x_0, +\infty)| = \left| \left( \bigcup_{n<\omega} (y_n,y_0]\right) \cup \left( \bigcup_{n<\omega} [x_0, x_n) \right) \right| \\
& \leq \left| \bigcup_{n<\omega} (y_n,y_0] \right| + \left| \bigcup_{n<\omega} [x_0, x_n) \right| \leq \left( \sum_{n<\omega} |(y_n, y_0]| \right) + \left( \sum_{n<\omega} |[x_0, x_n)| \right) \\
& \leq \left( \sum_{n<\omega} \aleph_0 \right) + \left( \sum_{n<\omega} \aleph_0 \right) = \aleph_0 + \aleph_0 = \aleph_0.
\end{align*}
Therefore in all cases, if $\faktor{L}{\sim_\omega} \cong 1$ and both $\cf(L)$ and $\cf^*(L)$ are countable, then $L$ is countable. 
\end{proof}

\begin{prop}\label{writing L as a disjoint union of intervals}
Suppose $L$ is a linear order with a first element such that $\faktor{L}{\sim_\omega} \cong 1$ and $\cf(L)=\omega_1$. Then $L$ embeds into a linear order $L'$ such that  $\faktor{L'}{\sim_\omega} \cong 1$, $\cf(L') = \omega_1$, $L'$ has a first element, and $L'$ has a strictly increasing cofinal sequence $\langle y_\alpha: \alpha < \omega_1 \rangle$ such that $L'=\bigcup_{\alpha < \omega_1}[y_\alpha, y_{\alpha + 1})$.
\end{prop}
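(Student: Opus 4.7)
The plan is to augment $L$ with one new element $g_\lambda$ for each countable limit ordinal $\lambda$, thereby closing up gaps in a chosen cofinal $\omega_1$-sequence and obtaining an $L'$ that decomposes into the desired half-open intervals. To set up, since $\cf(L) = \omega_1$ and $L$ has first element $x_0$, I would first fix a strictly increasing cofinal sequence $\langle x_\alpha : \alpha < \omega_1 \rangle$ in $L$ whose initial term is $x_0$ (which can be arranged from any cofinal $\omega_1$-sequence by prepending $x_0$ if necessary). For each $l \in L$, define its \emph{level} $\ell(l) := \min\{\alpha < \omega_1 : l < x_\alpha\}$; this is either a successor $\alpha+1$ (in which case $l \in [x_\alpha, x_{\alpha+1})_L$) or a limit $\lambda$ (in which case $l$ lies in the gap $S_\lambda := \{l' \in L : l' < x_\lambda \text{ and } l' \geq x_\gamma \text{ for every } \gamma < \lambda\}$).

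Next, define $L' := L \sqcup \{g_\lambda : \lambda < \omega_1 \text{ a limit}\}$, extending the order of $L$ by declaring, for $l \in L$ and limit $\lambda$, that $g_\lambda > l$ iff $\ell(l) < \lambda$, and, for distinct limits, that $g_\lambda < g_\mu$ iff $\lambda < \mu$. Set $y_\alpha := x_\alpha$ when $\alpha$ is zero or a successor, and $y_\lambda := g_\lambda$ when $\lambda$ is a limit. I would then verify, in order, that (a) $L'$ is a linear order containing $L$ and $y_0 = x_0$ is its first element; (b) $\langle y_\alpha \rangle$ is strictly increasing and cofinal in $L'$, since the subsequence $\{x_{\alpha+1}\}$ is already cofinal in $L$ and every $g_\lambda$ is below $x_{\lambda+1}$; (c) $L' = \bigsqcup_{\alpha < \omega_1}[y_\alpha, y_{\alpha+1})$ by a case analysis on the level of each element (elements with $\ell(l) = \alpha + 1$ land in $[y_\alpha, y_{\alpha+1})$; gap elements in $S_\lambda$ land in $[y_\lambda, y_{\lambda+1})$; and each $g_\lambda$ is $y_\lambda$); (d) each interval $[y_\alpha, y_{\alpha+1})$ is countable, since it is contained in $\{g_\alpha\} \cup S_\alpha \cup [x_\alpha, x_{\alpha+1})_L$, and each of these pieces is countable because $x_0 \sim_\omega x_\alpha$ and $x_\alpha \sim_\omega x_{\alpha+1}$ in $L$; (e) $\faktor{L'}{\sim_\omega} \cong 1$ follows from (c) and (d), since any $[p,q]_{L'}$ is then a countable union of these countable intervals; and (f) $\cf(L') = \omega_1$ by the regularity of $\omega_1$.

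The main obstacle is correctly placing the $g_\lambda$'s: the elements of $S_\lambda$ must land in $[y_\lambda, y_{\lambda+1}) = [g_\lambda, x_{\lambda+1})$, which forces $g_\lambda$ to sit strictly \emph{below} $S_\lambda$ (while still above everything of level $< \lambda$). If instead $g_\lambda$ were placed above $S_\lambda$, the elements of $S_\lambda$ would be stranded in the gap between $\{y_\gamma : \gamma < \lambda\}$ and $y_\lambda$, and the decomposition in (c) would fail. Once this correct placement is in hand, the remaining verifications reduce to routine bookkeeping with the level function.
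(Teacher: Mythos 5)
Your proposal is correct and follows essentially the same route as the paper's proof: fix a strictly increasing cofinal $\omega_1$-sequence starting at the first element, insert points at the limit stages so that the augmented sequence realizes the relevant suprema, and then read off the decomposition of $L'$ into consecutive countable half-open intervals. The only real difference is that you insert a fresh point $g_\lambda$ at \emph{every} countable limit $\lambda$ (placed at the cut just below the gap set $S_\lambda$), whereas the paper reuses $\sup\{x_\beta:\beta<\lambda\}$ whenever it already exists in $L$ and adds a new point only when it does not; both variants work, and yours avoids that case split at the cost of always adjoining $\omega_1$-many new elements.
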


\begin{proof}
Suppose $L$ has a first element $x_0$, and let $X:=\langle x_\alpha: \alpha < \omega_1 \rangle$ be strictly increasing and cofinal in $L$. We construct the desired linear order $L'$ and cofinal sequence $\langle y_\alpha: \alpha < \omega_1 \rangle$ by adding to $X$ the suprema in $L$ of sequences in $X$, if these exist, or -- if these suprema do not exist -- by adding new elements to $L$. We do this by defining sequences $\{s_\alpha: \alpha < \omega_1\}$ and $\{n_\alpha: \alpha < \omega_1\}$ as follows.

Let $\alpha < \omega_1$. If $\alpha$ is a successor ordinal, then set $s_\alpha = n_\alpha = x_0$. Next, suppose that $\alpha < \omega_1$ is a limit ordinal.  If $x_\alpha = \sup\{x_\beta: \beta < \alpha \}$ in $L$, then set $s_\alpha = n_\alpha = x_0$. If $\sup\{x_\beta: \beta < \alpha\}$ exists in $L$ but $\sup\{x_\beta: \beta < \alpha\} \neq x_\alpha$, then we set $s_\alpha = \sup\{x_\beta: \beta < \alpha\}$ and set $n_\alpha = x_0$.
If $\sup\{x_\beta: \beta < \alpha\}$ does not exist in $L$, then we set $s_\alpha = x_0$, and we add a new element $n_\alpha$ to $L$ with the property that $\{x_\beta: \beta < \alpha\} < n_\alpha < \{l \in L: l>x_\beta \textrm{ for all } \beta < \alpha\}$. That is, in this case we add a new element that will be the supremum of $\{x_\beta: \beta < \alpha\}$ in $L'$. 

Let $L'$ consist of $L$ together with all of the newly-added elements $n_\alpha$. Let $Y$ consist of $X$ together with the newly-added elements $s_\alpha$ and $n_\alpha$. That is,
\[
L' := L \cup \{n_\alpha: \alpha < \omega_1\} \quad \textrm{and} \quad Y:=X \cup \{s_\alpha: \alpha < \omega_1\} \cup \{n_\alpha: \alpha < \omega_1\}.
\]
We still have $\faktor{L'}{\sim_\omega} \cong 1$: for suppose $l, m \in L'$ with $l<m$. $X$ is still cofinal in $L'$, as each new element $n_\alpha \in L' \setminus L$ was less than $x_\alpha \in X$. Then we can choose $\alpha < \omega_1$ such that $m < x_\alpha$. 
Observe that between any two elements of $L$, at most countably many new elements $n_\alpha$ were added in forming $L'$. Then since $\faktor{L}{\sim_\omega} \cong 1$, we have  $|[x_0, x_\alpha)| = |([x_0, x_\alpha) \cap L) \cup ([x_0, x_\alpha) \setminus L)| \leq \aleph_0 + \aleph_0 = \aleph_0$. Then $|[l,m)|$ is also countable, as $[l, m) \subseteq [x_0, x_\alpha)$, and so $l \sim_\omega m$ in $L'$.

Next, observe that $Y$ is cofinal in $L'$, and also $Y$ is well-ordered of order type $\omega_1$. We can then index the elements of $Y$ as $Y=\langle y_\alpha: \alpha < \omega_1 \rangle$ (and we will have $y_0=x_0$). By construction, $Y$ has the property that for each limit $\alpha < \omega_1$, $y_\alpha = \sup \{y_\beta: \beta < \alpha\}.$

Finally, we claim that $L'=\bigcup_{\alpha < \omega_1}[y_\alpha, y_{\alpha + 1})$: for let $l \in L'$. If $l \in Y$, we are done, so suppose $l \not\in Y$. The set $\{y_\alpha \in Y: y_\alpha > l\}$ is nonempty since $\cf(L)=\cf(L')=\omega_1$ (so that $L'$ has no last element). Then since $Y$ is well-ordered, $\{y_\alpha \in Y: y_\alpha > l\}$ has a least element  $y_{\alpha'}$. Suppose  $\alpha'$ is a successor ordinal; say $\alpha' = \beta+1$. By the minimality of $y_{\alpha'}$, it must be that $y_\beta < l$; so $l \in [y_\beta, y_{\beta+1})$, as desired. We now claim that in fact $\alpha'$ must be a successor ordinal: for suppose not. By an earlier claim, $y_{\alpha'} = \sup\{y_\beta: \beta < \alpha'\}$. By the minimality of $\alpha'$, we have that for all $\beta < \alpha', y_\beta \leq l$. But then we would have 
\[
\sup\{y_\beta: \beta < \alpha'\} \leq l < y_{\alpha'} = \sup\{y_\beta: \beta < \alpha'\},
\]
which is a contradiction.
\end{proof}

\begin{prop}\label{L' embeds into U}
Suppose $L$ is a linear order with a first element such that $\faktor{L}{\sim_\omega} \cong 1$ and $\cf(L)=\omega_1$. Suppose further that $L$ has a strictly increasing, cofinal sequence $\langle x_\alpha: \alpha < \omega_1 \rangle$ such that $L=\bigcup_{\alpha < \omega_1}[x_\alpha, x_{\alpha + 1})$. Then $L$ is isomorphic to a suborder of $U$. 
\end{prop}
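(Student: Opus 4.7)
The plan is to embed $L$ into the ``positive half'' of $U$, mapping the cofinal sequence $\langle x_\alpha : \alpha < \omega_1 \rangle$ onto the spine $\langle u_\alpha : \alpha < \omega_1 \rangle$ of $U$, and then using Cantor's universality theorem for $\Q$ to embed each countable ``gap'' $(x_\alpha, x_{\alpha+1})$ into the copy $\Q(\alpha)$ that $U$ inserts between $u_\alpha$ and $u_{\alpha+1}$. The elements $-u_\alpha$, $\Q(-\alpha)$, and $\Q(\textrm{mid})$ of $U$ will not be needed, since $L$ has a first element and so extends only upward.

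First I would observe that for each $\alpha < \omega_1$ the interval $[x_\alpha, x_{\alpha+1})$ is countable: since $\faktor{L}{\sim_\omega} \cong 1$, we have $x_\alpha \sim_\omega x_{\alpha+1}$, so $[x_\alpha, x_{\alpha+1}]$ is countable, and hence so is $(x_\alpha, x_{\alpha+1})$. By Theorem \ref{rationals are universal}, for each $\alpha < \omega_1$ I can fix an order-preserving injection $g_\alpha : (x_\alpha, x_{\alpha+1}) \to \Q(\alpha)$ (taking $g_\alpha$ to be the empty map when the interval is empty). I would then define $f : L \to U$ piecewise by setting $f(x_\alpha) := u_\alpha$ for each $\alpha < \omega_1$, and $f(l) := g_\alpha(l)$ for $l \in (x_\alpha, x_{\alpha+1})$. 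Because the intervals $\{[x_\alpha, x_{\alpha+1}) : \alpha < \omega_1\}$ partition $L$ by hypothesis, $f$ is well-defined on all of $L$.

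To verify that $f$ is an order-preserving injection, I would take $l, m \in L$ with $l < m$. If $l$ and $m$ lie in the same interval $[x_\alpha, x_{\alpha+1})$, then either both lie in $(x_\alpha, x_{\alpha+1})$ and order is preserved by $g_\alpha$, or $l = x_\alpha$ and $m \in (x_\alpha, x_{\alpha+1})$, in which case $f(l) = u_\alpha < \Q(\alpha) \ni f(m)$ in $U$ by Definition \ref{d: alternate rational long line}. If instead $l \in [x_\alpha, x_{\alpha+1})$ and $m \in [x_\beta, x_{\beta+1})$ with $\alpha < \beta$, then $f(l) \in \{u_\alpha\} \cup \Q(\alpha)$ and $f(m) \in \{u_\beta\} \cup \Q(\beta)$, and Definition \ref{d: alternate rational long line} gives $\{u_\alpha\} \cup \Q(\alpha) < u_{\alpha+1} \leq u_\beta \leq f(m)$. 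Injectivity is immediate from order preservation, so $L$ is isomorphic to the suborder $f[L]$ of $U$. Since the argument is essentially an assembly of Cantor embeddings along a transfinite spine, I anticipate no serious obstacle; the only care needed is the bookkeeping of cases in the order-preservation check and confirming that the indexing of the chosen cofinal sequence in $L$ matches the indexing convention for $\{u_\alpha : \alpha < \omega_1\}$ in Definition \ref{d: alternate rational long line}.
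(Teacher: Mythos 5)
Your proposal is correct and follows essentially the same route as the paper's own proof: map the cofinal sequence $\langle x_\alpha\rangle$ onto the spine $\langle u_\alpha\rangle$, use Theorem \ref{rationals are universal} to embed each countable gap $(x_\alpha,x_{\alpha+1})$ into $\Q(\alpha)$, and check order preservation by the same case analysis. No further comment is needed.
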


\begin{proof}
Let $L$ be a linear order with a first element $x_0$ such that $\faktor{L}{\sim_\omega} \cong 1$ and $\cf(L)=\omega_1$, and let $\langle x_\alpha: \alpha < \omega_1 \rangle$ be a strictly increasing, cofinal sequence such that $L$ can be written as $L=\bigcup_{\alpha < \omega_1}[x_\alpha, x_{\alpha + 1})$. Note that the intervals $[x_\alpha, x_{\alpha + 1})$, for $\alpha < \omega_1$, are pairwise disjoint. For each $\alpha < \omega_1$, the subset $[x_\alpha, x_{\alpha+1})$ of $L$ is countable, by our assumption that $\faktor{L}{\sim_\omega} \cong 1$. Then by Theorem \ref{rationals are universal}, there is an embedding $f_\alpha$ mapping $(x_\alpha, x_{\alpha+1})$ to the interval $\Q(\alpha)=(u_\alpha, u_{\alpha + 1})$ of $U$. Define a map $f:L \to U$ as follows: for $l \in L$, there is a unique $\alpha_l<\omega_1$ such that $l \in [x_{\alpha_l}, x_{\alpha_l+1})$. Set
\[
f(l) = \left\{
\begin{array}{ll}
u_{\alpha_l}, & \textrm{ if } l=x_{\alpha_l} \\
f_{\alpha_l}(l), & \textrm{ if not.}
\end{array}
\right.
\]
We claim that $f$ is an embedding of $L$ into $U$: for suppose $l, m \in L$ with $l<m$. First suppose that $l, m \in [x_\alpha, x_{\alpha+_1})$ for some $\alpha< \omega_1$, so that $\alpha_l = \alpha_m = \alpha$. If $x_\alpha < l$, then $f(l) = f_\alpha(l) < f_\alpha(m) = f(m)$ because $f_\alpha$ is an embedding. If $l=x_\alpha$, then $f(l) = u_\alpha < f_\alpha(m)=f(m)$ because $f_\alpha(m) \in (u_\alpha, u_{\alpha + 1})$.
Next, suppose that $l \in [x_\alpha, x_{\alpha+1})$ and $m \in [x_\beta, x_{\beta+1})$ for some $\alpha<\beta$. Then $f(l) \in [u_\alpha, u_{\alpha+1})$ and $f(m) \in [u_\beta, u_{\beta+1})$, and $[u_\alpha, u_{\alpha+1}) < [u_\beta, u_{\beta+1})$ by construction of $U$, so $f(l) < f(m)$. Thus $f$ is an order-preserving map from $L$ into $U$; that is, $L$ embeds into $U$.
\end{proof}

\begin{prop}\label{L embeds into U, case cf(L)=omega1 with first element}
Let $L$ be a linear order with a first element such that $\faktor{L}{\sim_\omega} \cong 1$ and $\cf(L)=\omega_1$. Then $L$ is isomorphic to a suborder of $U$.  
\end{prop}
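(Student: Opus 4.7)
The plan is straightforward: this proposition is essentially the composition of the two preceding results, Proposition \ref{writing L as a disjoint union of intervals} and Proposition \ref{L' embeds into U}. The hypotheses on $L$ are exactly those needed to feed $L$ into Proposition \ref{writing L as a disjoint union of intervals}: $L$ has a first element, $\faktor{L}{\sim_\omega} \cong 1$, and $\cf(L) = \omega_1$.

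So first I would invoke Proposition \ref{writing L as a disjoint union of intervals} to obtain an embedding $\iota: L \hookrightarrow L'$ into a linear order $L'$ that still has a first element, still satisfies $\faktor{L'}{\sim_\omega} \cong 1$, still has $\cf(L') = \omega_1$, and moreover admits a strictly increasing cofinal sequence $\langle y_\alpha : \alpha < \omega_1 \rangle$ with $L' = \bigcup_{\alpha < \omega_1} [y_\alpha, y_{\alpha+1})$. This last feature is precisely the ``disjoint-union-of-half-open-intervals'' structure demanded by the hypothesis of Proposition \ref{L' embeds into U}.

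Next I would apply Proposition \ref{L' embeds into U} to the linear order $L'$, obtaining an order-preserving injection $f: L' \hookrightarrow U$. The composition $f \circ \iota : L \to U$ is then an order-preserving injection, witnessing that $L$ is isomorphic to a suborder of $U$.

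There is no substantive obstacle; all the real work has been done in the two cited propositions, and the present proposition simply packages them together into the statement we actually need for the case of a linear order with a first element and cofinality $\omega_1$.
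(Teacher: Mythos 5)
Your proposal is correct and is essentially identical to the paper's own proof: both apply Proposition \ref{writing L as a disjoint union of intervals} to embed $L$ into a suitable $L'$, then apply Proposition \ref{L' embeds into U} to embed $L'$ into $U$, and compose the two embeddings.
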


\begin{proof}
Suppose $L$ is a linear order with a first element such that $\faktor{L}{\sim_\omega} \cong 1$ and $\cf(L)=\omega_1$. Then by Proposition \ref{writing L as a disjoint union of intervals}, $L$ embeds into a linear order $L'$ having a first element such that $\faktor{L'}{\sim_\omega} \cong 1$, $\cf(L')=\omega_1$, and there is a strictly increasing cofinal sequence $\langle y_\alpha: \alpha < \omega_1 \rangle$ in $L'$ such that $L'=\bigcup_{\alpha < \omega_1}[y_\alpha, y_{\alpha + 1}).$ Then by Proposition \ref{L' embeds into U}, $L'$ embeds into $U$. That is, we have $L \hookrightarrow L'$ and $L' \hookrightarrow U$, and therefore $L \hookrightarrow U$.
\end{proof}

Reversing the order, we obtain, \textit{mutatis mutandis}, the following versions of Propositions \ref{writing L as a disjoint union of intervals}, \ref{L' embeds into U}, and \ref{L embeds into U, case cf(L)=omega1 with first element}, respectively.

\begin{prop}\label{writing L as a disjoint union of intervals, reverse}
Suppose $L$ is a linear order with a last element such that $\faktor{L}{\sim_\omega} \cong 1$ and $\cf^*(L)=\omega_1$. Then $L$ embeds into a linear order $L'$ such that  $\faktor{L'}{\sim_\omega} \cong 1$, $\cf^*(L') = \omega_1$, $L'$ has a last element, and $L'$ has a strictly decreasing coinitial sequence $\langle y_\alpha: \alpha < \omega_1 \rangle$ such that $L'=\bigcup_{\alpha < \omega_1}(y_{\alpha+1}, y_\alpha]$. \hfill \qed
\end{prop}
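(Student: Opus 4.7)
The plan is to obtain Proposition \ref{writing L as a disjoint union of intervals, reverse} as a direct order-reversal of Proposition \ref{writing L as a disjoint union of intervals}, which is exactly what the authors' use of ``\textit{mutatis mutandis}'' signals. First I would verify that the map $L \mapsto L^*$ carries the hypotheses of the present proposition to those of Proposition \ref{writing L as a disjoint union of intervals}: a last element of $L$ is a first element of $L^*$; $\cf^*(L) = \omega_1$ becomes $\cf(L^*) = \omega_1$; and $\faktor{L^*}{\sim_\omega} \cong 1$ because the defining condition $|[\{x,y\}]| \leq \aleph_0$ in Lemma/Definition \ref{countable condensation} is visibly symmetric in $x$ and $y$ and makes no reference to the direction of the order.

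Next I would apply Proposition \ref{writing L as a disjoint union of intervals} to $L^*$ to obtain a linear order $M$ into which $L^*$ embeds, with a first element, $\faktor{M}{\sim_\omega} \cong 1$, $\cf(M) = \omega_1$, and a strictly increasing cofinal sequence $\langle z_\alpha : \alpha < \omega_1 \rangle$ such that $M = \bigcup_{\alpha < \omega_1}[z_\alpha, z_{\alpha+1})$. Setting $L' := M^*$ and $y_\alpha := z_\alpha$ (now viewed as elements of $L'$), order-reversal swaps first and last, cofinal and coinitial, increasing and decreasing, and converts each half-open interval $[z_\alpha, z_{\alpha+1}) \subseteq M$ into the half-open interval $(y_{\alpha+1}, y_\alpha] \subseteq L'$, yielding the desired decomposition $L' = \bigcup_{\alpha < \omega_1}(y_{\alpha+1}, y_\alpha]$. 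The required embedding $L \hookrightarrow L'$ comes from the chain $L \cong (L^*)^* \hookrightarrow M^* = L'$, and the remaining conclusions (last element, coinitiality $\omega_1$, trivial countable condensation) follow transparently from the corresponding properties of $M$ under reversal.

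I do not anticipate any genuine obstacle: the only point worth articulating is the invariance of $\sim_\omega$ under $L \mapsto L^*$, which underwrites the entire duality. An alternative would be to repeat the construction from the proof of Proposition \ref{writing L as a disjoint union of intervals} verbatim with ``increasing/cofinal/supremum'' replaced by ``decreasing/coinitial/infimum'' throughout, but the reduction above is cleaner and avoids duplicating the bookkeeping for when new elements must be adjoined to $L$.
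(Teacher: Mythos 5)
Your reduction via $L \mapsto L^*$ is correct and is precisely what the paper intends: the proposition is stated with no written proof beyond the remark that it follows \emph{mutatis mutandis} from Proposition \ref{writing L as a disjoint union of intervals} by reversing the order. Your explicit check that $\sim_\omega$ is invariant under order reversal and that $[z_\alpha, z_{\alpha+1})$ becomes $(y_{\alpha+1}, y_\alpha]$ supplies exactly the bookkeeping the paper leaves implicit.
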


\begin{prop}\label{L' embeds into U, reverse}
Suppose $L$ is a linear order with a last element such that $\faktor{L}{\sim_\omega} \cong 1$ and $\cf^*(L)=\omega_1$. Suppose further that $L$ has a strictly decreasing, coinitial sequence $\langle x_\alpha: \alpha < \omega_1 \rangle$ such that $L=\bigcup_{\alpha < \omega_1}(x_{\alpha+1}, x_\alpha]$. Then $L$ is isomorphic to a suborder of $U$. \hfill \qed
\end{prop}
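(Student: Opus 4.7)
The plan is to mirror the proof of Proposition \ref{L' embeds into U}, using the ``negative half'' of the $\omega_1$-lengthened rational line $U$ in place of the positive half. Under the given hypotheses, $L$ has a last element $x_0$, the sequence $\langle x_\alpha : \alpha < \omega_1 \rangle$ is strictly decreasing and coinitial, and the half-open intervals $(x_{\alpha+1}, x_\alpha]$ partition $L$. Since $\faktor{L}{\sim_\omega} \cong 1$, each interval $(x_{\alpha+1}, x_\alpha]$ is countable, hence so is its open part $(x_{\alpha+1}, x_\alpha)$. By Cantor's theorem (Theorem \ref{rationals are universal}), for each $\alpha < \omega_1$ we may fix an embedding $f_\alpha : (x_{\alpha+1}, x_\alpha) \to \Q(-\alpha)$, where $\Q(-\alpha)$ is the copy of $\Q$ sitting strictly between $-u_{\alpha+1}$ and $-u_\alpha$ in $U$.

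Next, I would define $f : L \to U$ by sending each ``spine'' element $x_\alpha$ to $-u_\alpha$ and each non-spine element $l \in (x_{\alpha_l+1}, x_{\alpha_l})$ to $f_{\alpha_l}(l)$, where $\alpha_l$ is the unique ordinal with $l \in (x_{\alpha_l+1}, x_{\alpha_l}]$. Concretely,
\[
f(l) = \begin{cases} -u_{\alpha_l}, & \textrm{if } l = x_{\alpha_l}, \\ f_{\alpha_l}(l), & \textrm{otherwise.} \end{cases}
\]
This assignment respects the partition of $L$, and each piece lands in the corresponding ``slot'' of $U$: the element $x_\alpha$ lands at the maximum $-u_\alpha$ of the block $(-u_{\alpha+1}, -u_\alpha]$, while the interior of $(x_{\alpha+1}, x_\alpha)$ lands inside $\Q(-\alpha)$.

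To verify order-preservation, I would consider two cases just as in Proposition \ref{L' embeds into U}. If $l < m$ lie in the same block $(x_{\alpha+1}, x_\alpha]$, then either both are in $(x_{\alpha+1}, x_\alpha)$ (and $f_\alpha$ preserves order) or $m = x_\alpha$ and $l \in (x_{\alpha+1}, x_\alpha)$, in which case $f(l) = f_\alpha(l) \in \Q(-\alpha)$ while $f(m) = -u_\alpha$, and $\Q(-\alpha) < -u_\alpha$ by construction of $U$. If instead $l \in (x_{\alpha+1}, x_\alpha]$ and $m \in (x_{\beta+1}, x_\beta]$ with $l < m$, then $\alpha > \beta$, so $f(l) \in [-u_{\alpha+1}, -u_\alpha]$ and $f(m) \in [-u_{\beta+1}, -u_\beta]$, and the defining inequality $\Q(-\alpha) \cup \{-u_\alpha\} < \Q(-\beta) \cup \{-u_\beta\}$ in $U$ gives $f(l) < f(m)$.

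There is no real obstacle here beyond careful bookkeeping: every step, including the existence of the Cantor embeddings, the indexing of $L$ via the partition, and the verification of order-preservation across block boundaries, is the exact reverse-order analogue of the argument in Proposition \ref{L' embeds into U}. The only subtlety worth flagging is that the sequence $\langle x_\alpha \rangle$ is decreasing, so one must check that the indexing convention for $\Q(-\alpha)$ (which sits \emph{below} $-u_\alpha$ rather than above it) aligns with the half-open intervals $(x_{\alpha+1}, x_\alpha]$; this is precisely what the ``$-u_{\alpha+1} < \Q(-\alpha) < -u_\alpha$'' clause in Definition \ref{d: alternate rational long line} was set up to provide.
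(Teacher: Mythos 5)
Your proof is correct and is exactly the argument the paper intends: the paper gives no separate proof of this proposition, stating only that it follows \emph{mutatis mutandis} from Proposition \ref{L' embeds into U} by reversing the order, and your write-up is a faithful and careful spelling-out of that reversal (spine elements $x_\alpha \mapsto -u_\alpha$, open blocks into $\Q(-\alpha)$ via Cantor, order checked within and across blocks).
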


\begin{prop}\label{L embeds into U, case cf*(L)=omega1 with last element}
Let $L$ be a linear order with a last element such that $\faktor{L}{\sim_\omega} \cong 1$ and $\cf^*(L)=\omega_1$. Then $L$ is isomorphic to a suborder of $U$.  \hfill \qed
\end{prop}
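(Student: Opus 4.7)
The plan is to mirror the argument of Proposition \ref{L embeds into U, case cf(L)=omega1 with first element}, using the reverse-order versions of its two ingredient propositions that have just been stated. This is the ``dual'' situation, and the key structural fact that makes the duality go through cleanly is that $U$ is (by construction) symmetric under order reversal: the point set decomposes as $\{-u_\alpha : \alpha<\omega_1\} \cup \Q(\mathrm{mid}) \cup \{u_\alpha:\alpha<\omega_1\}$ together with copies $\Q(-\alpha)$ and $\Q(\alpha)$ of the rationals, and the formal description of $U$ is invariant under the involution sending $u_\alpha \mapsto -u_\alpha$ and $\Q(\alpha)\mapsto \Q(-\alpha)$. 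Hence $U^* \cong U$, which is what allows one to transport embeddings through order reversal.

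First, I would take $L$ as hypothesized: a linear order with a last element, $\faktor{L}{\sim_\omega}\cong 1$, and $\cf^*(L)=\omega_1$. By Proposition \ref{writing L as a disjoint union of intervals, reverse}, $L$ embeds into a linear order $L'$ with the same properties plus a strictly decreasing coinitial sequence $\langle y_\alpha : \alpha<\omega_1\rangle$ such that $L' = \bigcup_{\alpha<\omega_1}(y_{\alpha+1}, y_\alpha]$. Next, Proposition \ref{L' embeds into U, reverse} applies directly to $L'$ and yields an embedding $L'\hookrightarrow U$. Composing, we get $L \hookrightarrow L' \hookrightarrow U$, which is exactly the conclusion.

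There is really no obstacle here beyond checking that the reverse-order propositions genuinely apply, and the whole point of stating them ``mutatis mutandis'' is to avoid repeating the symmetric argument. The only moderately delicate point, if one were being fussy, would be verifying that the embedding $L\hookrightarrow L'$ obtained from the reverse proposition and the embedding $L'\hookrightarrow U$ are compatible as order-preserving injections into $U$; but both are order-preserving by construction, so the composition is automatically an embedding of $L$ into $U$. Thus the proof is a two-line composition.

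\begin{proof}
By Proposition \ref{writing L as a disjoint union of intervals, reverse}, $L$ embeds into a linear order $L'$ with a last element, $\faktor{L'}{\sim_\omega}\cong 1$, $\cf^*(L')=\omega_1$, and with a strictly decreasing coinitial sequence $\langle y_\alpha : \alpha<\omega_1\rangle$ satisfying $L'=\bigcup_{\alpha<\omega_1}(y_{\alpha+1}, y_\alpha]$. Then by Proposition \ref{L' embeds into U, reverse}, $L'$ embeds into $U$. Composing the embeddings $L\hookrightarrow L'$ and $L'\hookrightarrow U$ yields $L\hookrightarrow U$.
\end{proof}
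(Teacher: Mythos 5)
Your proof is correct and is exactly the argument the paper intends: the proposition is stated with a \qed precisely because it follows \emph{mutatis mutandis} by composing Proposition \ref{writing L as a disjoint union of intervals, reverse} with Proposition \ref{L' embeds into U, reverse}, mirroring the proof of Proposition \ref{L embeds into U, case cf(L)=omega1 with first element}. Your additional observation that $U^* \cong U$ is true and is the underlying reason the dual statements hold, but it is not needed once the reverse-order propositions are taken as given.
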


We can now prove that the $\omega_1$-lengthened rational line $U$ is universal for the linear orders that condense to $1$ modulo the countable condensation.

\begin{thm}\label{main result, universal set}
Let $L$ be any linear order. Then $\faktor{L}{\sim_\omega} \cong 1$ if and only if $L$ is isomorphic to a suborder of $U$.
\end{thm}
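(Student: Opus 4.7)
The $(\Leftarrow)$ direction is already Proposition \ref{things in U condense to 1 under cc}, so the work is in the $(\Rightarrow)$ direction. My plan is to dispose of the easy cases directly, using the propositions already proved, and then reduce the remaining ``two-sided'' case to those by splitting $L$ at a chosen point and gluing two half-embeddings.

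Suppose $\faktor{L}{\sim_\omega} \cong 1$. If $L$ is countable, Theorem \ref{rationals are universal} embeds $L$ into $\Q \cong \Q(\textrm{mid}) \subseteq U$. If $L$ is uncountable and has a first element, then $\cf^*(L) = 1$, so Lemma \ref{uncountable suborders of U have cf at least omega1} forces $\cf(L) = \omega_1$ and Proposition \ref{L embeds into U, case cf(L)=omega1 with first element} applies. The case where $L$ has a last element is symmetric via Proposition \ref{L embeds into U, case cf*(L)=omega1 with last element}.

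The remaining case is $L$ uncountable with neither a first nor a last element. Pick any $x_0 \in L$ and write $L = L_- \cup L_+$, where $L_- := L_{\leq x_0}$ and $L_+ := L_{\geq x_0}$ overlap in $\{x_0\}$. Each $L_\pm$ is convex in $L$ and so inherits $\faktor{L_\pm}{\sim_\omega} \cong 1$; moreover $L_+$ has first element $x_0$ and $L_-$ has last element $x_0$, so each falls under a case already handled. I will fix a point $* \in \Q(\textrm{mid}) \subseteq U$ and construct embeddings $\phi_+ \colon L_+ \hookrightarrow U_{\geq *}$ and $\phi_- \colon L_- \hookrightarrow U_{\leq *}$, both sending $x_0$ to $*$, with $\phi_+(L_+ \setminus \{x_0\}) \subseteq U_{>*}$ and $\phi_-(L_- \setminus \{x_0\}) \subseteq U_{<*}$. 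For uncountable $L_+$, inspection of the construction in Proposition \ref{L' embeds into U} shows that the embedding of Proposition \ref{L embeds into U, case cf(L)=omega1 with first element} lands in the ``positive half'' $U^+ := \{u_\alpha : \alpha < \omega_1\} \cup \bigcup_{\alpha < \omega_1} \Q(\alpha)$ of $U$ and sends $x_0$ to $u_0$; postcomposing with the map $U^+ \to U_{\geq *}$ defined by $u_0 \mapsto *$ and the identity elsewhere produces $\phi_+$, which is order-preserving because every element of $U^+ \setminus \{u_0\}$ is strictly greater than $u_0 > *$ (the last inequality since $\Q(\textrm{mid}) < u_0$ in $U$). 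For countable $L_+$, Theorem \ref{rationals are universal} embeds $L_+ \setminus \{x_0\}$ into $\Q(\textrm{mid})_{>*}$, and extending by $x_0 \mapsto *$ gives $\phi_+$. The construction of $\phi_-$ is entirely symmetric.

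The glued map $\phi \colon L \to U$ with $\phi|_{L_\pm} = \phi_\pm$ is well-defined at the overlap (both halves send $x_0$ to $*$) and is an order embedding, since any cross-comparison between $L_- \setminus \{x_0\}$ and $L_+ \setminus \{x_0\}$ reduces to a comparison between elements of $U_{<*}$ and $U_{>*}$. The main obstacle is not any deep technicality but the bookkeeping needed to coordinate the two half-embeddings at $x_0$: to do this I must extract, from the proofs of Propositions \ref{L embeds into U, case cf(L)=omega1 with first element} and \ref{L embeds into U, case cf*(L)=omega1 with last element}, the refined information about where the designated endpoint is sent and which half of $U$ the image occupies, and then ``shift'' by the identity-elsewhere trick so that both half-embeddings meet at the common midpoint $*$.
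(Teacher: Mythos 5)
Your proof is correct and follows essentially the same route as the paper's: the same four-way case split (countable; uncountable with a first element; with a last element; with neither), the same appeal to Lemma \ref{uncountable suborders of U have cf at least omega1} and Propositions \ref{L embeds into U, case cf(L)=omega1 with first element} and \ref{L embeds into U, case cf*(L)=omega1 with last element}, and the same splitting of the two-sided case at a point $x_0$. The only difference is that you coordinate the two half-embeddings at a common midpoint $* \in \Q(\textrm{mid})$, whereas the paper simply places the images of $(-\infty,x_0]$ and $[x_0,\infty)$ into the disjoint halves $(-\infty,-u_0]$ and $[u_0,\infty)$ of $U$, which makes the gluing automatic; your extra bookkeeping is valid but not needed.
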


\begin{proof}
If $L$ is isomorphic to a suborder of $U$, then $\faktor{L}{\sim_\omega} \cong 1$ by Proposition \ref{things in U condense to 1 under cc}.

Now suppose that $\faktor{L}{\sim_\omega} \cong 1$.

\textit{Case 1:} If $L$ is countable, then $L$ embeds into $\Q(1)$ by Theorem \ref{rationals are universal}. 

\textit{Case 2:} Suppose $L$ is uncountable and has a first element. Then $\cf(L)=\omega_1$ by Lemma \ref{uncountable suborders of U have cf at least omega1}, so $L$ embeds into $U$ by Proposition \ref{L embeds into U, case cf(L)=omega1 with first element}.

\textit{Case 3:} Next, suppose $L$ is uncountable and has a last element. Then $\cf^*(L)=\omega_1$ by Lemma \ref{uncountable suborders of U have cf at least omega1}, so $L$ embeds into $U$ by Proposition \ref{L embeds into U, case cf*(L)=omega1 with last element}.

\textit{Case 4:} Finally, suppose $L$ is uncountable with neither a first element nor a last element. Fix any $x_0 \in L$ and denote $L^-=(-\infty, x_0]$ and $L^+ = [x_0, \infty)$. Then we can embed $L^-$ into $U^-=(-\infty,-u_0]$ by cases 1 or 3, and we can embed $L^+$ into $U^+=[u_0,\infty)$ by cases 1 or 2.
\end{proof}

\section{The universal set $U$ and multiplication modulo the countable condensation}\label{s: The universal set U and multiplication modulo the countable condensation}

In this section, we define a multiplication operation modulo the countable condensation, and show how it relates to the universal set $U$ defined in Section \ref{s: universal set for cc}. 

\begin{defn}\label{d: multiplication mod the countable condensation}
For linear orders $M$ and $L$, define an operation $\cdot_\omega$ (the lexicographic product modulo the countable condensation) by
\[
M \cdot_\omega L := \ot (\faktor{ML}{\sim_\omega}).
\]
\end{defn}

The following lemma implies that the linear order $\omega_1$ is a right identity for $\timesc$.

\begin{lem}\label{right absorption for countable condensation}
Let $M$ be any linear order. Then $\faktor{M \omega_1}{\sim_\omega} \cong M$.
\end{lem}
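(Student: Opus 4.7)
The plan is to show that the natural projection $\pi : M\omega_1 \to M$ given by $\pi(m,\alpha) = m$ descends to an order-isomorphism $\tilde{\pi} : \faktor{M\omega_1}{\sim_\omega} \to M$. Since $M\omega_1$ (under the lex convention of this paper) consists of $M$-many consecutive copies of $\omega_1$, this reduces to identifying the $\sim_\omega$-equivalence classes with the ``columns'' $\{m\} \times \omega_1$.

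First I would verify that any two points in the same column are equivalent modulo $\sim_\omega$. Given $(m,\alpha), (m,\beta) \in M\omega_1$ with $\alpha \leq \beta$, the interval between them is contained in $\{m\} \times [\alpha,\beta]$, which is countable since $\beta$ is a countable ordinal; this is essentially the argument of Lemma \ref{omega_1 mod cc is 1}. Next I would show that points in different columns are inequivalent: if $m_1 < m_2$ in $M$ and $(m_1,\alpha), (m_2,\beta) \in M\omega_1$, then the interval between them contains the entire tail $\{(m_1,\gamma) : \alpha < \gamma < \omega_1\}$, which has cardinality $\aleph_1$; hence $(m_1,\alpha) \not\sim_\omega (m_2,\beta)$.

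Combining these two steps, the equivalence class $\cc(m,\alpha)$ is exactly the column $\{m\} \times \omega_1$. Therefore the map $\tilde{\pi}(\cc(m,\alpha)) := m$ is well-defined and bijective. For order preservation, if $m_1 < m_2$ in $M$, then every element of $\{m_1\} \times \omega_1$ lies below every element of $\{m_2\} \times \omega_1$, so $\cc(m_1,0) < \cc(m_2,0)$ in the quotient; the converse follows from the same observation, giving the order-isomorphism.

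Honestly, there is no serious obstacle here: the statement is essentially a fiberwise application of Lemma \ref{omega_1 mod cc is 1}. The only point requiring vigilance is the lexicographic convention in force: $M\omega_1$ must be parsed as ``$M$ copies of $\omega_1$,'' so that each fiber of the projection really is a copy of $\omega_1$ (which condenses to a point) rather than a copy of $M$ (which in general does not).
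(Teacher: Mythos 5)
Your proposal is correct and follows essentially the same route as the paper's proof: show each column $\{m\}\times\omega_1$ is contained in a single class via Lemma \ref{omega_1 mod cc is 1}, show distinct columns are separated by an uncountable tail of the lower column, and conclude the classes are exactly the columns so the quotient is $M$. No substantive differences.
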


\begin{proof}
Let $M$ be any linear order. Recall that $M \omega_1$ is formed by replacing every $m \in M$ with a copy $\omega_1(m)$ of $\omega_1$. Let $x, y \in M \omega_1$. If $x, y \in \omega_1(m)$ for some $m \in M$ -- that is, if $x$ and $y$ are in the same copy of $\omega_1$ -- then $x \sim_\omega y$, by Lemma \ref{omega_1 mod cc is 1}. This means that for every $x \in M \omega_1$, $\boldsymbol{c}_\omega(x) \supseteq \omega_1(m)$, where $\omega_1(m)$ is the copy of $\omega_1$ containing $x$. 

Now suppose $x$ and $y$ are in different copies of $\omega_1$; without loss of generality, $x \in \omega_1(m)$ and $y \in \omega_1(m')$ for some $m, m' \in M$ with $m<m'$. Then there are at least $\omega_1$-many elements of $M \omega_1$ between $x$ and $y$, namely the $\omega_1$-many elements of $\omega_1(m)$ that are greater than $x$. This means that for any $x \in M \omega_1$, $\boldsymbol{c}_\omega(x) \subseteq \omega_1(m)$ where $\omega_1(m)$ is the copy of $\omega_1$ containing $x$.

For $x \in M \omega_1$, denote by $m_x$ the element of $M$ such that $x \in \omega_1(m_x)$. Then we have $\boldsymbol{c}_\omega(x) < \boldsymbol{c}_\omega(y)$ if and only if $\omega_1(m_x) < \omega_1(m_y)$ if and only if $m_x < m_y$; that is, $\faktor{M \omega_1}{\sim_\omega}$ is isomorphic to $M$.
\end{proof}

It is straightforward to verify that Lemma \ref{right absorption for countable condensation} remains true if $\omega_1$ is replaced by $\omega_1^*$. By contrast, the result fails if we replace $\omega_1$ by any countable linear order $L$: for if $L$ is countable, then for any countable linear order $M \not\cong 1$, $\faktor{ML}{\sim_\omega} \cong 1 \not\cong M$ because $ML$ is countable. In Theorem \ref{TFAE for right identity mod cc}, we characterize the right identities for multiplication modulo the countable condensation: all of those linear orders $L$ such that $\faktor{ML}{\sim_\omega} \cong M$ for every $M$. (One could also say that such $L$ are exactly the linear orders that are always absorbed on the right when multiplying modulo the countable condensation.)

\begin{lem}\label{cf omega 1 iff no ctbl tail}
Let $L$ be a linear order with $\faktor{L}{\sim_\omega} \cong 1$. Then $\cf(L) = \omega_1$ if and only if $L$ has no countable tail.
\end{lem}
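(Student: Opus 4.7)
The plan is to combine Corollary \ref{cf at most omega 1 when L condenses to  1} with the basic fact that the cofinality of a linear order is $1$ (when there is a last element) or an infinite regular cardinal. Under the hypothesis $\faktor{L}{\sim_\omega} \cong 1$, Corollary \ref{cf at most omega 1 when L condenses to  1} gives $\cf(L) \leq \omega_1$, so the only possibilities are $\cf(L) \in \{1, \omega, \omega_1\}$. The lemma then reduces to matching ``$L$ has a nonempty countable tail'' with ``$\cf(L) \in \{1, \omega\}$''.

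For the forward direction, I would argue by contrapositive: suppose $L = L' + T$ for some nonempty countable tail $T$. If $T$ has a maximum, that element is a maximum of $L$ as well, so $\cf(L) = 1$. Otherwise $T$ is a countable order with no maximum, so it admits a cofinal $\omega$-sequence, which is automatically cofinal in $L$; thus $\cf(L) \leq \omega$. In either case $\cf(L) < \omega_1$, so this direction needs no appeal to the condensation hypothesis beyond choosing the right cases.

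For the reverse direction, I would suppose $L$ has no countable tail. Taking $T = L$ shows $L$ is uncountable, and taking $T$ to be the singleton of a hypothetical last element shows $L$ has no last element, so $\cf(L) \geq \omega$. The only substantive step is ruling out $\cf(L) = \omega$: given a strictly increasing cofinal sequence $\langle x_n : n < \omega \rangle$, I would set $T := \{y \in L : y \geq x_0\}$, which is a tail of $L$. Cofinality of the $x_n$'s gives $T = \bigcup_{n<\omega}[x_0, x_n]$, and since $\faktor{L}{\sim_\omega} \cong 1$ we have $x_0 \sim_\omega x_n$, so each $[x_0, x_n]$ is countable. Thus $T$ is a countable tail, contradicting the hypothesis; combined with Corollary \ref{cf at most omega 1 when L condenses to  1} this forces $\cf(L) = \omega_1$.

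The only nontrivial point in the whole argument is this last one: the hypothesis $\faktor{L}{\sim_\omega} \cong 1$ is exactly what converts a countable cofinal sequence into a countable tail, because between $x_0$ and any later $x_n$ only countably many points of $L$ can appear, so the tail above $x_0$ becomes a countable union of countable intervals. Everything else is routine bookkeeping about what cofinality means in the presence or absence of a last element.
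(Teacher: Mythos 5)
Your proposal is correct and takes essentially the same route as the paper: the crux in both arguments is that, under $\faktor{L}{\sim_\omega}\cong 1$, the tail above the first point $x_0$ of a countable cofinal sequence is the countable union of the countable intervals $[x_0,x_n]$, hence countable. The paper phrases the forward direction directly (a cofinal $\omega_1$-sequence starting inside an arbitrary tail witnesses that tail's uncountability) rather than via your contrapositive, and runs the reverse direction as a cardinality estimate on an arbitrary cofinal sequence of length $\mu$ instead of your trichotomy $\cf(L)\in\{1,\omega,\omega_1\}$, but the content is identical.
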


\begin{proof}
Let $L$ be a linear order with $\faktor{L}{\sim_\omega} \cong 1$.

Suppose $\cf(L)=\omega_1$, and let $T$ be a tail of $L$: so we can write $L=L'+T$ for some $L'$. Choose $x_0 \in T$ and a strictly increasing, cofinal sequence $\langle x_\alpha: \alpha < \omega_1 \rangle$ in $L$ that begins with $x_0$. Since $T$ is a tail, it is closed upwards, so $x_\alpha \in T$ for each $\alpha < \omega_1$. Thus $T$ is uncountable. Since the arbitrarily chosen tail $T$ was uncountable, $L$ has no countable tail.

Now suppose every tail of $L$ is uncountable. Let $\langle x_\alpha: \alpha < \mu \rangle$, for some ordinal $\mu$, be strictly increasing and cofinal in $L$. Then, in particular, for every $l>x_0$, there is an $\alpha < \mu$ such that $l < x_\alpha$. This means that we can write $L \uparrow x_0=\bigcup_{\alpha < \mu} [x_0,x_\alpha)$. Since $L \uparrow x_0$ is a tail, it is uncountable, by assumption; and since we have $\faktor{L}{\sim_\omega} \cong 1$, it must be that $|[x_0, x_\alpha)| \leq \aleph_0$ for each $\alpha < \mu$. Therefore we have
\[
\omega_1 \leq |L \uparrow x_0| = \left| \bigcup_{\alpha < \mu} [x_0,x_\alpha) \right| \leq \sum_{\alpha < \mu}|[x_0, x_\alpha)| \leq \sum_{\alpha < \mu} \aleph_0 = |\mu| \aleph_0 = \max\{|\mu|, \aleph_0 \}.
\]
Then $\omega_1 \leq \max\{|\mu|, \aleph_0\}$, so $|\mu| \geq \omega_1$. Thus the sequence $\langle x_\alpha: \alpha \leq \mu \rangle$ is uncountable; and so, since this was an arbitrary cofinal sequence, $\cf(L) \geq \omega_1$. We have $\cf(L) \leq \omega_1$ by Corollary \ref{cf at most omega 1 when L condenses to  1}, so $\cf(L) = \omega_1$.
\end{proof}

Reversing the order, we obtain the reverse version of Lemma \ref{cf omega 1 iff no ctbl tail}. 

\begin{lem}\label{reverse omega 1 iff no ctbl head}
Let $L$ be a linear order with $\faktor{L}{\sim_\omega} \cong 1$. Then $\cf^*(L) = \omega_1$ if and only if $L$ has no countable head. \hfill \qed
\end{lem}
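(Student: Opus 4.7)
The plan is to reduce to Lemma \ref{cf omega 1 iff no ctbl tail} by passing to the reverse order. Let $L^*$ denote $L$ with the reversed ordering. The first observation is that the countable condensation is symmetric in a point-set sense: for any $x, y \in L$, the interval $[\{x,y\}]$ in $L$ coincides with $[\{x,y\}]$ in $L^*$ as a set, so $x \sim_\omega y$ in $L$ iff $x \sim_\omega y$ in $L^*$. Consequently $\faktor{L}{\sim_\omega} \cong 1$ iff $\faktor{L^*}{\sim_\omega} \cong 1$.

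Next I would note two correspondences between order-theoretic notions of $L$ and $L^*$. First, strictly increasing cofinal sequences in $L^*$ are exactly the reverses of strictly decreasing coinitial sequences in $L$, so $\cf(L^*) = \cf^*(L)$. Second, if we write $L = H + L'$, so that $H$ is a head of $L$, then $L^* = (L')^* + H^*$, exhibiting $H^*$ as a tail of $L^*$ with the same cardinality as $H$; conversely every tail of $L^*$ arises this way from a head of $L$. Hence $L$ has a countable head iff $L^*$ has a countable tail.

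Now I apply Lemma \ref{cf omega 1 iff no ctbl tail} to $L^*$: under the hypothesis $\faktor{L^*}{\sim_\omega} \cong 1$, we have $\cf(L^*) = \omega_1$ iff $L^*$ has no countable tail. Translating both sides back through the equivalences of the previous paragraph yields $\cf^*(L) = \omega_1$ iff $L$ has no countable head, which is the desired statement.

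There is no real obstacle here beyond being careful to verify that each of the three ingredients (the countable condensation condition, the coinitiality/cofinality swap, and the head/tail swap) transfers faithfully under order-reversal; the author appears to have judged these routine, which is why the statement is left with a ``\textit{mutatis mutandis}'' justification and a \verb|\qed| after the statement.
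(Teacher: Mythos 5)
Your proof is correct and is exactly the argument the paper intends by ``reversing the order'': the paper leaves the lemma with a \verb|\qed| precisely because it regards the passage to $L^*$ (with $\sim_\omega$ unchanged as a point-set relation, $\cf(L^*)=\cf^*(L)$, and heads of $L$ corresponding to tails of $L^*$) as routine. Nothing in your write-up deviates from that intended route, and all three transfer steps are verified correctly.
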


\begin{thm}\label{TFAE for right identity mod cc}
Let $L$ be any linear order. The following are equivalent:
\begin{enumerate}
\item $\faktor{ML}{\sim_\omega} \cong M$ for every linear order $M$;
\item $\faktor{L}{\sim_\omega} \cong 1$ and ($\cf(L) = \omega_1$ or $\cf^*(L) = \omega_1$);
\item $\faktor{L}{\sim_\omega} \cong 1$ and ($L$ has no countable tail or $L$ has no countable head);
\item $L$ embeds into the $\omega_1$-lengthened rational line $U$ and $L$ has a strictly monotone sequence of length $\omega_1.$
\item $L$ is isomorphic to an uncountable suborder of $U$.
\end{enumerate}
\end{thm}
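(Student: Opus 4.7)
My plan is to treat condition (2) as the hub and establish pairwise equivalences radiating from it, reserving the real content for the equivalence (1) $\Leftrightarrow$ (2). The other equivalences are essentially repackagings of results already developed.

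First, (2) $\Leftrightarrow$ (3) is immediate from Lemmas \ref{cf omega 1 iff no ctbl tail} and \ref{reverse omega 1 iff no ctbl head}, which, under the common hypothesis $\faktor{L}{\sim_\omega} \cong 1$, match $\cf(L) = \omega_1$ with having no countable tail and $\cf^*(L) = \omega_1$ with having no countable head. For the block (2) $\Leftrightarrow$ (4) $\Leftrightarrow$ (5), I would use Theorem \ref{main result, universal set} to translate $\faktor{L}{\sim_\omega} \cong 1$ into $L \hookrightarrow U$. The hypothesis $\cf(L) = \omega_1$ directly yields a strictly increasing $\omega_1$-sequence; conversely, given a strictly increasing $\omega_1$-sequence $\langle x_\alpha : \alpha < \omega_1 \rangle$ in an $L$ with $\faktor{L}{\sim_\omega} \cong 1$, the sequence must be cofinal (else bounded above by some $y$, making $[x_0, y]$ uncountable in contradiction to $x_0 \sim_\omega y$), and its existence excludes shorter cofinal sequences by a standard pigeonhole, yielding $\cf(L) = \omega_1$. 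The decreasing case is dual. For (5): any $L$ with a monotone $\omega_1$-sequence satisfies $|L| \geq \aleph_1$, and conversely any uncountable $L \hookrightarrow U$ has $\faktor{L}{\sim_\omega} \cong 1$ by Proposition \ref{things in U condense to 1 under cc}, so Lemma \ref{uncountable suborders of U have cf at least omega1} supplies the required monotone $\omega_1$-sequence.

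The main content is (1) $\Leftrightarrow$ (2). For (1) $\Rightarrow$ (2), setting $M = 1$ gives $\faktor{L}{\sim_\omega} \cong 1$ at once. If, contrary to the disjunction, both $\cf(L)$ and $\cf^*(L)$ are strictly less than $\omega_1$, then Lemma \ref{uncountable suborders of U have cf at least omega1} forces $L$ to be countable; then $\omega L$ is countable and $\faktor{\omega L}{\sim_\omega} \cong 1 \not\cong \omega$, contradicting (1). For the substantive direction (2) $\Rightarrow$ (1), assume $\faktor{L}{\sim_\omega} \cong 1$ and, without loss of generality, $\cf(L) = \omega_1$ (the $\cf^*(L) = \omega_1$ case is symmetric). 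Write $L(m)$ for the $m$-th copy of $L$ inside $ML$. I claim the $\sim_\omega$-classes of $ML$ are exactly the $L(m)$. Within a single copy, any interval $[x, y]$ with $x, y \in L(m)$ coincides with the corresponding interval of $L$, which is countable by $\faktor{L}{\sim_\omega} \cong 1$. Across copies, for $x \in L(m)$ and $y \in L(m')$ with $m < m'$, the upper set $\{z \in L(m) : z \geq x\}$ is contained in $[x, y]$, and any strictly increasing cofinal $\omega_1$-sequence in $L$ eventually lies above $x$, producing $\aleph_1$ elements; hence $x \not\sim_\omega y$. The resulting quotient is therefore canonically order-isomorphic to $M$.

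The main obstacle, and precisely where both conjuncts of (2) enter, is the cross-copy step in (2) $\Rightarrow$ (1): the clause $\faktor{L}{\sim_\omega} \cong 1$ is what collapses each copy $L(m)$ to a single class, while the cofinality hypothesis is exactly what keeps distinct copies from being identified.
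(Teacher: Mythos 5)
Your proposal is correct and follows essentially the same route as the paper: the hard direction (showing the $\sim_\omega$-classes of $ML$ are exactly the copies of $L$, with the cofinality/tail hypothesis preventing cross-copy identification) and the converse (specializing to $M=1$ and to a countable $M$) are the paper's arguments, and the remaining equivalences rest on the same lemmas (Theorem \ref{main result, universal set}, Lemma \ref{uncountable suborders of U have cf at least omega1}, and Lemmas \ref{cf omega 1 iff no ctbl tail}--\ref{reverse omega 1 iff no ctbl head}). The only differences are cosmetic: you organize the implications as a hub around (2) rather than a cycle, and you obtain (5)$\Rightarrow$(4) from Lemma \ref{uncountable suborders of U have cf at least omega1} instead of the paper's direct pigeonhole on $U$.
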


\begin{proof}
(1) $\implies$ (5): Suppose $\faktor{ML}{\sim_\omega} \cong M$ for every linear order $M$. In particular, this holds for $M=1$, so $L \preceq U$ by Theorem \ref{main result, universal set}. If $L$ were countable, then we would have $\faktor{\Q L}{\sim_\omega} \cong 1 \not\cong \Q$ because $\Q L$ would also be countable; so it must be that $L$ is uncountable. 

(5) $\implies$ (2): Suppose $L$ is isomorphic to an uncountable suborder of $U$. By Theorem \ref{main result, universal set}, $\faktor{L}{\sim_\omega} \cong 1$; and then by Lemma \ref{uncountable suborders of U have cf at least omega1}, $\cf(L)=\omega_1$ or $\cf^*(L) = \omega_1$.

(2) $\implies$ (3): Suppose $\faktor{L}{\sim_\omega} \cong 1$, and also suppose that $\cf(L)=\omega_1$ or $\cf^*(L)=\omega_1$. If $\cf(L)=\omega_1$, then $L$ has no countable tail by Lemma \ref{cf omega 1 iff no ctbl tail}; and if $\cf^*(L)=\omega_1$, then $L$ has no countable head by Lemma \ref{reverse omega 1 iff no ctbl head}.

(3) $\implies$ (1): Suppose $\faktor{L}{\sim_\omega} \cong 1$, and suppose also that $L$ has no countable tail or $L$ has no countable head. Let $M$ be any linear order, and consider the lexicographic product $ML$. This is the linear order formed by replacing each element $m$ of $M$ by a copy $L_m$ of $L$. For $x \in ML$, denote by $m_x$ the unique element of $M$ for which $x \in L_{m_x}$. To show that $\faktor{ML}{\sim_\omega} \cong M$, it is enough to show that $\cc(x) = L_{x_m}$ for each $x \in ML$, as we have $x \leq y$ in $ML$ if and only if $\cc(x) \leq \cc(y)$ in $\faktor{ML}{\sim_\omega}$. 

Let $x \in LM$. We have $\faktor{L_{m_x}}{\sim_\omega} \cong 1$ by assumption, so $\cc(x) \supseteq L_{m_x}$. Now take $y \in ML$ with $y \not\in L_{m_x}$; without loss of generality, say $y \in L_{m'}$ for some $m' > m_x$. We have assumed that $L$ has no countable tail or no countable head. First suppose $L$ has no countable tail. Then also $L_{m_x}$ has no countable tail, so in particular $\{z \in L_{m_x}: z \geq x\}$ is uncountable. Then as $\{z \in L_{m_x}: z \geq x\} \subseteq [x,y]$, $[x,y]$ is uncountable, and so $x \not\sim_\omega y$. Next suppose $L$ has no countable head. Then, similarly, we have $x \not\sim_\omega y$ because $[x,y]$ contains the uncountable set $\{z \in L_{m'}: z \leq y\}$, so that $\cc(x) \subseteq L_{m_x}.$ Thus $\cc(x) = L_{m_x}$.

(4) $\implies$ (5): If $L$ embeds into $U$ and $L$ has a strictly monotone sequence of length $\omega_1$, then $L$ is uncountable as well.

(5) $\implies$ (4): Suppose $L$ is isomorphic to an uncountable subset of $U$. For purposes of this proof, we may suppose that $L \subseteq U$; moreover, by the pigeonhole principle, either $|L \cap (U \uparrow u_0)| > \aleph_0$ or $|L \cap (U \downarrow u_0)|>\aleph_0$.  Say $|L \cap (U \uparrow u_0)|$ is uncountable. (The argument is similar if we assume instead that $|L \cap (U \downarrow u_0)|$ is uncountable.) Since each interval $\{u_\alpha\} \cup \Q(\alpha)$ of the right half of $U$ is countable, we have -- again by the pigeonhole principle -- that $L \cap (\{u_\alpha\} \cup \Q(\alpha)) \neq \emptyset$ for $\omega_1$-many $\alpha < \omega_1$. Then for each $\alpha < \omega_1$, we can pick an $l_\alpha \in L$ such that $l_\alpha \geq u_\alpha$. These points $l_\alpha$, for $\alpha < \omega_1$, might not all be distinct, but there must be $\omega_1$ distinct elements of $L$ among them, and this gives us the desired uncountable strictly monotone sequence in $L$.
\end{proof}

By Theorem \ref{TFAE for right identity mod cc}, the right identities for multiplication modulo the countable condensation are exactly the uncountable linear orders that embed into the $\omega_1$-lengthened rational line $U$. In the next section, we show that the corresponding set of order types, under $\timesc$, forms a left-regular band.

\section{The left-regular band of right identities for multiplication modulo the countable condensation}\label{s: left-regular band}

For the remainder of the paper, we deal with order types of linear orders; thus, when we write $L \timesc M \cong N$, we mean that whenever $X$ and $Y$ are linear orders with order types $L$ and $M$ respectively, the order type of $X \timesc Y$ is $N$.

\begin{defn}\label{d: semigroup}
A \textbf{semigroup} is a set with an associative binary operation.    
A \textbf{band} is a semigroup in which every element is idempotent. 
A \textbf{left-regular band} is a band $B$ such that $xyx=xy$ for all $x, y \in B$. 
\end{defn}

\begin{thm}\label{big left-regular band}
Denote by $\mathcal{S}$ the set of all order types of linear orders $L$ with the property that $M \timesc L = \faktor{ML}{\sim_\omega} \cong M$ for all linear orders $M$. (That is, $\mathcal{S}$ is the set of right identities for $\timesc$.) Then $\langle \mathcal{S}, \sim_\omega \rangle$ is a left-regular band.
\end{thm}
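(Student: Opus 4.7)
The plan is to exploit the defining property of $\mathcal{S}$: every $L \in \mathcal{S}$ is a right identity for $\timesc$ acting on the entire class of linear orders, so $M \timesc L \cong M$ for an arbitrary linear order $M$. This single fact makes every required axiom essentially automatic, and the main task is just careful bookkeeping.

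First, I would verify that $\mathcal{S}$ is closed under $\timesc$. For $L, M \in \mathcal{S}$, applying the defining property of $M$ to the linear order $L$ gives $L \timesc M \cong L$, which is already in $\mathcal{S}$. Next, I would check associativity of $\timesc$ on $\mathcal{S}$: for $L, M, N \in \mathcal{S}$, both $(L \timesc M) \timesc N \cong L \timesc N \cong L$ and $L \timesc (M \timesc N) \cong L \timesc M \cong L$, so both bracketings agree. Note that we do not need associativity of $\timesc$ in full generality on all linear orders, only on $\mathcal{S}$, where it follows immediately from the right-identity property.

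Next I would establish the band and left-regularity axioms. For idempotency, take $M = L$ in the defining condition to obtain $L \timesc L \cong L$. For the left-regular identity $L \timesc M \timesc L \cong L \timesc M$, I would apply the defining property of $L \in \mathcal{S}$ to the linear order $L \timesc M$: this immediately yields $(L \timesc M) \timesc L \cong L \timesc M$, as desired.

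There is no significant obstacle; the proof is essentially a chain of direct invocations of the defining property of $\mathcal{S}$. The only implicit ingredient is that $\timesc$ is well-defined on order types, which holds because isomorphic linear orders have isomorphic lexicographic products and condensations respect this isomorphism.
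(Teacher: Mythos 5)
Your proof is correct and follows essentially the same route as the paper's: every axiom is verified by repeated direct invocation of the right-identity property defining $\mathcal{S}$. Your left-regularity step is in fact a small shortcut over the paper's (you apply the defining property of $L$ directly to the order $L \timesc M$, whereas the paper first reduces $L \timesc M \timesc L$ to $L \timesc L \cong L$ and then identifies that with $L \timesc M$), but both rest on exactly the same observation.
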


\begin{proof}
\textit{(Closure)} Let $L_1, L_2 \in \mathcal{S}$ and let $M$ be any linear order. Then 
\begin{align*}
\faktor{(M(L_1 \cdot_\omega L_2))}{\sim_\omega} & \cong \faktor{\left(M \left( \faktor{L_1 L_2}{\sim_\omega} \right) \right)}{\sim_\omega} \\
& \cong \faktor{(M L_1)}{\sim_\omega} \quad \textrm{(as $L_2 \in \mathcal{S}$)} \\
& \cong M \quad \textrm{(as $L_1 \in \mathcal{S}$)}
\end{align*}
Therefore $L_1 \cdot_\omega L_2 \in \mathcal{S}$.

\textit{(Associativity)} Let $L_1, L_2, L_3 \in \mathcal{S}$. Then 
\begin{align*}
(L_1 \cdot_\omega L_2) \cdot_\omega L_3 & \cong \left(\faktor{(L_1 L_2)}{\sim_\omega} \right) \cdot_\omega L_3 \\
& \cong L_1 \cdot_\omega L_3 \quad \textrm{(as $L_2 \in \mathcal{S}$)} \\
& \cong \faktor{(L_1 L_3)}{\sim_\omega}  \\
& \cong L_1 \quad \textrm{(as $L_3 \in \mathcal{S}$)},
\end{align*}

and also

\begin{align*}
L_1 \cdot_\omega (L_2 \cdot_\omega L_3) & \cong L_1 \cdot_\omega \left( \faktor{(L_2 L_3)}{\sim_\omega} \right) \\
& \cong L_1 \cdot_\omega L_2 \quad \textrm{(as $L_3 \in \mathcal{S}$)} \\
& \cong \faktor{(L_1 L_2)}{\sim_\omega} \\
& \cong L_1 \quad \textrm{(as $L_2 \in \mathcal{S})$}.
\end{align*}
Therefore $(L_1 \cdot_\omega L_2) \cdot_\omega L_3 \cong L_1 \cdot_\omega (L_2 \cdot_\omega L_3).$

\textit{(Idempotence)} Let $L \in \mathcal{S}$. Then $L \cdot_\omega L \cong \faktor{LL}{\sim_\omega} \cong L$ by definition of $\mathcal{S}$.

\textit{(Left-regular property)} Let $L_1, L_2 \in \mathcal{S}$. Then 
\begin{align*}
L_1 \cdot_\omega L_2 \cdot_\omega L_1 & \cong (L_1 \cdot_\omega L_2) \cdot_\omega L_1 \quad \textrm{(by associativity)} \\
& \cong \left( \faktor{(L_1 L_2)}{\sim_\omega} \right) \cdot_\omega L_1 \\
& \cong L_1 \cdot_\omega L_1 \quad \textrm{(as $L_2 \in \mathcal{S}$)} \\
& \cong L_1 \quad \textrm{(by idempotence)} \\
& \cong \faktor{(L_1 L_2)}{\sim_\omega} \quad \textrm{(as $L_2 \in \mathcal{S}$)} \\
& \cong L_1 \cdot_\omega L_2.
\end{align*}
Therefore $L_1 \cdot_\omega L_2 \cdot_\omega L_1 \cong L_1 \cdot_\omega L_2$.
\end{proof}

The set of \textit{all} order types of suborders of $U$ (not just the uncountable ones) does not form a left-regular band, for idempotence fails: if $L \preceq U$ is countable, nonempty, and not the one-element linear order, then $LL$ is also countable, so that $L \cdot_\omega L \cong 1 \not\cong L$. However, we do have a weaker algebraic structure. 

\begin{lem}\label{countable second term}
Suppose $L_1$ is any suborder of $U$ and $L_2$ is a countable suborder of $U$. Then $L_1 \timesc L_2 \cong 1$.
\end{lem}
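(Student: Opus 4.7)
The plan is to show directly that any two points of the lexicographic product $L_1 L_2$ are related by $\sim_\omega$, i.e.\ the interval between them is countable; this forces $\faktor{L_1 L_2}{\sim_\omega} \cong 1$. I would adopt the notation used earlier in the paper: for each $m \in L_1$, let $L_2(m)$ denote the copy of $L_2$ indexed by $m$ in the lexicographic product, so $L_1 L_2 = \bigcup_{m \in L_1} L_2(m)$, and for $x \in L_1 L_2$ write $m_x$ for the unique element of $L_1$ with $x \in L_2(m_x)$.

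The first step is to dispense with the easy case: if $x, y \in L_1 L_2$ with $m_x = m_y$, then $[\{x,y\}] \subseteq L_2(m_x)$, which is countable since $L_2$ is. The substantive case is $m_x \neq m_y$, say $m_x < m_y$. Here
\[
[x, y] \subseteq \bigcup_{\substack{m \in L_1 \\ m_x \leq m \leq m_y}} L_2(m),
\]
so it suffices to bound the number of fibers appearing in this union. Since $L_1$ is a suborder of $U$, the set $\{m \in L_1 : m_x \leq m \leq m_y\}$ sits inside $[\{m_x, m_y\}]_U$, and by Proposition \ref{U condenses to 1 under cc} (which gives $\faktor{U}{\sim_\omega} \cong 1$, so that $m_x \sim_\omega m_y$ in $U$) this interval in $U$ is countable. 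Hence the union above is a countable union of countable sets, and so $[x,y]$ is countable.

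Thus in every case $x \sim_\omega y$, so $\faktor{L_1 L_2}{\sim_\omega}$ has a single equivalence class, i.e.\ $L_1 \timesc L_2 \cong 1$. No step is really an obstacle here; the only point that requires a little care is remembering that the countability of the base interval in $L_1$ comes not from $L_1$ itself (which may be uncountable, as in $L_1 = U$) but from the fact that $L_1$ embeds into $U$, whose intervals between any two elements are countable by Proposition \ref{U condenses to 1 under cc}.
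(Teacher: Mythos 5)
Your proof is correct and follows essentially the same route as the paper's: both decompose $[x,y]$ into the fibers $L_2(m)$ over the interval between $m_x$ and $m_y$ in $L_1$ and observe that this is a countable union of countable sets. The only cosmetic difference is that you justify the countability of the index interval via Proposition \ref{U condenses to 1 under cc} and the embedding of $L_1$ into $U$, whereas the paper cites $\faktor{L_1}{\sim_\omega} \cong 1$ directly; these amount to the same fact.
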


\begin{proof}
Let $L_1$ and $L_2$ be as above. $L_1L_2$ is the linear order obtained by replacing every $l \in L_1$ with a copy $L_2(l)$ of $L_2$. Let $x, y \in L_1L_2$.

\textit{Case 1: } Suppose $x, y \in L_2(l)$ for some $l \in L_1$ (that is, $x$ and $y$ are in the same copy of $L_2$). Since $\faktor{L_2}{\sim_\omega} \cong 1$ by Theorem \ref{main result, universal set}, $x \sim_\omega y$.

\textit{Case 2: } Suppose $x \in L_2(l)$ and $y \in L_2(m)$ for some $l, m \in L_1$ with $l \neq m$; say $l<m$. Then  
\[
[x,y] \subseteq L_2(l) \cup L_2(m) \cup \left( \bigcup_{l < k < m} L_2(k)\right),
\]
and this is a countable union of countable sets: each $L_2(k)$, for $l \leq k \leq m$, is countable because $L_2$ is countable; and $\{ k \in L_1: l < k < m\}$ is countable because $\faktor{L_1}{\sim_\omega} \cong 1$. Therefore $|[x,y]| \leq \omega$, so that $x \sim_\omega y.$

Thus we have that $x \sim_\omega y$ for all $x, y \in L_1L_2$, and so $\faktor{L_1L_2}{\sim_\omega} \cong 1$.
\end{proof}

\begin{thm}\label{suborders of U is a semigroup}
Denote by $\mathcal{X}$ the set of all order types of linear orders that embed into $U$. Then $\langle \mathcal{X}, \sim_\omega \rangle$ is a semigroup.
\end{thm}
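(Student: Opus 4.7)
The plan is to verify the two semigroup axioms -- closure under $\timesc$ and associativity -- by case analysis on whether each factor is countable or uncountable. The key structural input I will lean on is a dichotomy for elements of $\mathcal{X}$: by Theorem \ref{TFAE for right identity mod cc}, an uncountable $L \in \mathcal{X}$ already lies in the right-identity band $\mathcal{S}$ (so $M \timesc L \cong M$ for every $M$), while by Lemma \ref{countable second term}, a countable $L \in \mathcal{X}$ satisfies $A \timesc L \cong 1$ for every $A \in \mathcal{X}$. These two facts collapse nearly every product involving members of $\mathcal{X}$ to either the first factor or to the trivial order $1$, both of which clearly embed into $U$.

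First I would handle closure. Given $L_1, L_2 \in \mathcal{X}$, I split on whether $L_2$ is countable. In the countable case, Lemma \ref{countable second term} collapses $L_1 \timesc L_2$ to $1 \in \mathcal{X}$. In the uncountable case, $L_2 \in \mathcal{S}$ is a right identity, so $L_1 \timesc L_2 \cong L_1 \in \mathcal{X}$.

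Next I would prove associativity $(L_1 \timesc L_2) \timesc L_3 \cong L_1 \timesc (L_2 \timesc L_3)$ by splitting on whether $L_3$ is countable. If $L_3$ is uncountable, then $L_3$ is a right identity and both sides reduce directly to $L_1 \timesc L_2$. If $L_3$ is countable, I would argue both sides equal $1$: the left-hand side because $L_1 \timesc L_2 \in \mathcal{X}$ (by the closure step just completed), so Lemma \ref{countable second term} applies; the right-hand side because $L_2 \timesc L_3 \cong 1$ by Lemma \ref{countable second term}, after which $L_1 \timesc 1 \cong \ot(\faktor{L_1}{\sim_\omega}) \cong 1$, using that $L_1 \in \mathcal{X}$ gives $\faktor{L_1}{\sim_\omega} \cong 1$ via Theorem \ref{main result, universal set}.

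All steps are routine given the results already established; the main thing to be careful about is ordering the argument so that closure is verified before it is invoked inside the countable-$L_3$ subcase of associativity. I do not anticipate any genuine obstacle.
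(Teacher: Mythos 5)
Your proof is correct and rests on the same structural inputs as the paper's (Theorem \ref{TFAE for right identity mod cc} identifying the uncountable members of $\mathcal{X}$ with the right-identity band $\mathcal{S}$, and Lemma \ref{countable second term} collapsing products with countable second factor to $1$), but it is organized more economically. For closure the paper does not split into cases: it uses the chain $\faktor{L_1L_2}{\sim_\omega} \preceq \faktor{L_1U}{\sim_\omega} \cong L_1$ together with Proposition \ref{things in U condense to 1 under cc}, whereas you get the same conclusion from the countable/uncountable dichotomy on $L_2$; both are fine, and yours avoids having to justify that condensation respects embeddings in the second factor. For associativity the paper runs through all $8$ cases according to membership of $L$, $M$, $N$ in $\mathcal{S}$ versus $\mathcal{X}\setminus\mathcal{S}$, while you observe that only the status of $L_3$ matters: if $L_3$ is uncountable both sides are $L_1 \timesc L_2$ by the right-identity property, and if $L_3$ is countable both sides are $1$ (the left side via the closure step you correctly establish first, the right side via $L_2 \timesc L_3 \cong 1$ and $L_1 \timesc 1 \cong \faktor{L_1}{\sim_\omega} \cong 1$). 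This two-case argument subsumes the paper's eight cases and makes the dependence on closure explicit; your care in ordering closure before associativity is exactly the right point to flag.
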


\begin{proof}
By Theorem \ref{main result, universal set}, $\mathcal{X}$ consists of the order types of those $L$ for which $\faktor{L}{\sim_\omega} \cong 1$. By Theorem \ref{TFAE for right identity mod cc}, $\mathcal{S}$ consists of the uncountable suborders of $U$, and $\mathcal{X} \setminus \mathcal{S}$
consists of the countable suborders of $U$.

\textit{Closure: } Let $L_1, L_2 \in \mathcal{X}$. Then $L_1 \cdot_\omega L_2$ is isomorphic to a suborder of $L_1$:
\begin{align*}
L_1 \cdot_\omega L_2  = \faktor{L_1 L_2}{\sim_\omega}  & \preceq \faktor{L_1 U}{\sim_\omega} \quad \textrm{(as  $L_2 \preceq U$)}\\
& \cong L_1 \quad \textrm{(by Theorem \ref{TFAE for right identity mod cc})}.
\end{align*}
By assumption, $\faktor{L_1}{\sim_\omega} \cong 1$, so also $\faktor{(L_1 \cdot_\omega L_2)}{\sim_\omega} \cong 1$. Thus $L_1 \cdot_\omega L_2 \in \mathcal{X}$.

\textit{Associativity:} Let $L, M, N \in \mathcal{X}$. There are $8$ cases.

\textit{Case 1: } If $L, M, N \in \mathcal{S}$, then by definition of $\mathcal{S}$,
\[
L \timesc (M \timesc N) \cong L \timesc M \cong L \cong L \timesc M \cong (L \timesc M) \timesc N.
\]
Cases 2 through 7 use both the definition of $\mathcal{S}$ and Lemma \ref{countable second term}.

\textit{Case 2: } Suppose $L \in \mathcal{S}$, $M \in \mathcal{S}$, and 
$N \in \mathcal{X} \setminus \mathcal{S}$. Then
\[
L \timesc (M \timesc N) \cong L \timesc 1 \cong 1 \cong  L \timesc N \cong (L \timesc M) \timesc N. 
\]
\textit{Case 3: } Suppose $L \in \mathcal{S}$, $M \in \mathcal{X} \setminus \mathcal{S}$, and $N \in \mathcal{S}$. Then 
\[
L \timesc (M \timesc N) \cong L \timesc M \cong 1 \cong L \timesc M \cong (L \timesc M) \timesc N.
\]
\textit{Case 4: } Suppose $L \in \mathcal{X} \setminus \mathcal{S}$, $M \in \mathcal{S}$, and $N \in \mathcal{S}$. Then 
\[
L \timesc (M \timesc N) \cong L \timesc M \cong L \cong L \timesc M \cong (L \timesc M) \timesc N.
\]
\textit{Case 5: } Suppose $L \in \mathcal{S}$, $M \in \mathcal{X} \setminus \mathcal{S}$, and $N \in \mathcal{X} \setminus \mathcal{S}$. Then 
\[
L \timesc (M \timesc N) \cong L \timesc 1 \cong 1 \cong 1 \timesc N \cong (L \timesc M) \timesc N.
\]
\textit{Case 6: } Suppose $L \in \mathcal{X} \setminus \mathcal{S}$, $M \in \mathcal{S}$, and $N \in \mathcal{X} \setminus \mathcal{S}$. Then
\[
L \timesc (M \timesc N) \cong L \timesc 1 \cong 1 \cong L \timesc N \cong (L \timesc M) \timesc N.
\]
\textit{Case 7: } Suppose $L \in \mathcal{X} \setminus \mathcal{S}$, $M \in \mathcal{X} \setminus \mathcal{S}$, and $N \in \mathcal{S}$.
Then 
\[
L \timesc (M \timesc N) \cong L \timesc M \cong 1 \cong 1 \timesc N \cong (L \timesc M) \timesc N. 
\]
\textit{Case 8: } If $L, M, N \in \mathcal{X} \setminus \mathcal{S}$, then all three are countable, so by Lemma \ref{countable second term},
\[
L \timesc (M \timesc N) \cong L \timesc 1 \cong 1 \cong 1 \timesc N \cong (L \timesc M) \timesc N. 
\]
Thus in all cases we have $L \timesc (M \timesc N) \cong (L \timesc M) \timesc N$, and so $\timesc$ is an associative operation on $\mathcal{X}$.

Therefore we have that $\langle \mathcal{X}, \timesc \rangle$ is a semigroup.
\end{proof}

The diagram below shows schematically the multiplication table for the suborders of the $\omega_1$-lengthened rational line $U$ under multiplication modulo the countable condensation. The rows on the right-hand part of the table (that is, those entries corresponding to products $L_1 \cdot_\omega L_2$ where the second term $L_2$ is uncountable) are constant at the second term.
\[
\begin{tikzpicture}
\draw [very thick] (0,0) -- (0,9);
\draw [very thick] (-1,8) -- (8,8);
\draw (-1,6) -- (8,6);
\draw (2,9) -- (2,0);

\node at (1,9) {\small{countable}};
\node at (1,8.6) {\small{suborders}};
\node at (1,8.2) {\small{of $U$}};

\node at (-1,7.4) {\small{countable}};
\node at (-1,7) {\small{suborders}};
\node at (-1,6.6) {\small{of $U$}};

\node at (5,8.5) {\small{uncountable suborders of $U$}};

\node at (-1.2,3.4) {\small{uncountable}};
\node at (-1.2,3) {\small{suborders}};
\node at (-1.2,2.6) {\small{of $U$}};

\node at (-.5,8.5) {\Large{$\cdot_\omega$}};
\node at (1,3) {\Large{1}};

\node at (1,7) {\Large{1}};

\node at (5,3.5) {$\faktor{L_1 L_2}{\sim_\omega} \cong L_1$};
\node at (5,3) {this is the table for $\mathcal{S}$};
\node at (5,2.5) {(the left-regular band)};
\draw [decorate, decoration = {bumps, mirror}] (2.3,.3) -- (7.7,.3) -- (7.7,5.7);
\draw [decorate, decoration = bumps] (2.3,.3) -- (2.3,5.7) -- (7.7,5.7);

\node at (5,7) {$\faktor{L_1 L_2}{\sim_\omega} \cong L_1$};
\end{tikzpicture}
\]

\section{Future work}\label{s: future work}

\begin{enumerate}
\item Can we characterize the linear orders that obey left- or right-cancellation laws modulo the countable condensation?
\item Can we generalize our results to the condensation that results from declaring $x \sim y$ when $|[\{x,y\}]| \leq \kappa$, for $\kappa$ a regular cardinal?
\end{enumerate}

\end{document}